\numberwithin{equation}{section}
\newtheorem{Theorem}[equation]{Theorem}
\newtheorem{Proposition}[equation]{Proposition}
\newtheorem{Remark}[equation]{Remark}
\newtheorem{Conjecture}[equation]{Conjecture}
\def\Xint#1{\mathchoice
{\XXint\displaystyle\textstyle{#1}}%
{\XXint\textstyle\scriptstyle{#1}}%
{\XXint\scriptstyle\scriptscriptstyle{#1}}%
{\XXint\scriptscriptstyle\scriptscriptstyle{#1}}%
\!\int}
\def\XXint#1#2#3{{\setbox0=\hbox{$#1{#2#3}{\int}$}
\vcenter{\hbox{$#2#3$}}\kern-.5\wd0}}
\def\dashint{\Xint-}
\def\bbR{\mathbb{R}}
\def\ch{\rm{ch}}
\def\dw{\textup{d}w}
\begin{document}

\title[Two weight inequalities for bilinear forms]{Two weight inequalities for bilinear forms}

\author{Kangwei Li}

\address{Department of Mathematics and Statistics, P.O.B. 68 (Gustaf
H\"all\-str\"omin katu 2b), FI-00014 University of Helsinki, Finland}
\email{kangwei.li@helsinki.fi}

\thanks{The author is supported by the European Union through Tuomas Hyt\"onen's ERC
Starting Grant
``Analytic-probabilistic methods for borderline singular integrals''.
He is a member of the Finnish Centre of Excellence in Analysis and
Dynamics Research.
}

\date{\today}

\keywords{$A_p$-$A_\infty$ estimates, two weight theorem, one supremum estimate, separated bump conjecture}
\subjclass[2010]{42B25}

\begin{abstract}
Let $1\le p_0<p,q <q_0\le \infty$. Given a pair of weights $(w,\sigma)$ and a sparse family $\mathcal S$,  we study the two weight inequality for the following bi-sublinear form
\[
B(f, g)= \sum_{Q\in\mathcal S}\langle |f|^{p_0}\rangle_Q^{\frac 1{p_0}} \langle|g|^{q_0'}\rangle_Q^{\frac 1{q_0'}}\lambda_Q\le \mathcal N\|f\|_{L^{p}(w)}\|g\|_{L^{q'}(\sigma)}.
\]
When $\lambda_Q=|Q|$ and $p=q$, Bernicot, Frey and Petermichl showed that $B(f,g)$ dominates $\langle Tf, g\rangle$ for a large class of singular non-kernel operators. We give a characterization for the above inequality and then obtain the mixed $A_p$-$A_\infty$ estimates and the corresponding entropy bounds when $\lambda_Q=|Q|$ and $p=q$. We also proposed a new conjecture which implies both the one supremum conjecture and the separated bump conjecture.
\end{abstract}

\maketitle

\section{Introduction and main results}\label{Sect:1}
The weighted theory for Calder\'on-Zygmund operators has achieved several advances in the last decades. The quantitative relation between the weighted bound of the operator and the $A_p$ characteristics has attracted many authors' interest. The climax of this topic is the settle of the $A_2$ conjecture, which was due to Hyt\"onen \cite{Hytonen2012}. Lerner \cite{Lerner} also gave a simple proof for it by reducing the problem to study the so-called sparse operators. We refer the readers to \cite{Hytonen2012,Lerner} and the reference therein for an overview of this topic.

Later on, Hyt\"onen and Lacey \cite{HL2012} extended the $A_2$ theorem to the mixed $A_p$-$A_\infty$ estimate, and proposed the famous one supremum conjecture (which will be recalled below), which is in the theme of ``finding the minimal sufficient condition such that the two weight inequality holds''.
As far as we know, this conjecture is still open. Another problem in the same theme is the so-called separated bump conjecture (which will be recalled in Section \ref{sec:5}), which arised from work of Cruz-Uribe and P\'erez \cite{CP1, CP2}, and Cruz-Uribe and Reznikov and Volberg \cite{CRV}. The separated bump conjecture was just verified for the log-bumps \cite{CRV, ACM}. In general, it is still unknown, see also in \cite{Lacey} for more details.

Recently,
 Bernicot, Frey and Petermichl \cite{BFP} studied the weighted theory beyond Calder\'on-Zygmund theory. They gave the sharp weighted estimates for a large class of singular non-kernel operators by proving a domination theorem. To be precise, they showed that, if $T$ is bounded on $L^2$ and satisfies some cancellation property and Cotlar type inequality, then
for $f,g$ supported in $5Q_0$ for some cube $Q_0$,   there exists some sparse family $\mathcal S_0$ such that
\begin{equation}\label{eq:domination}
\Big|\int_{Q_0} Tf \cdot g d\mu\Big|\le C \sum_{P\in \mathcal S_0}\mu(P) \langle |f|^{p_0}\rangle_{5P}^{1/{p_0}}\langle |g|^{q_0'}\rangle_{5P}^{1/{q_0'}},
\end{equation}
where
\[
\langle h\rangle_Q=\frac 1{\mu(Q)}\int_Q h(x) d\mu,
\]
and $1\le p_0<q_0\le \infty$. Recall that we say $\mathcal S$ is sparse if  for any $Q\in \mathcal S$,
\[
\mu\Big(\bigcup_{ Q' \in \mathcal S, Q'\subsetneq Q }Q'\Big)\le \frac 12 \mu(Q).
\]
The above result is in the setting of locally compact separable metric space equipped
with a doubling Borel measure $\mu$, finite on compact sets and strictly positive on any nonempty
open set. In the following, we simply use $|Q|$ stands for $\mu(Q)$.

It is well-known that there exists  a finite collection of adjacent dyadic systems $\mathcal D_k$, $k=1,\cdots, K$, such that for any cube $Q$, there exists some $1\le k_0\le K$ such that $5Q\subset \tilde Q\in \mathcal D_{k_0}$ and $|\tilde Q|\eqsim |Q|$ (e.g. see \cite{HK}). Then RHS\eqref{eq:domination} can be dominated by
\[
\int_{\bbR^n} \sum_{k=1}^K\sum_{Q\in \mathcal D_k}  \langle |f|^{p_0}\rangle_{ Q}^{1/{p_0}}\langle |g|^{q_0'}\rangle_{Q}^{1/{q_0'}}\sum_{\substack{P\in \mathcal S_0\\ \tilde P=Q}} \mathbf 1_{P}(x) dx.
\]
Then following similar arguments as that in \cite{Hanninen}, it suffices to consider the following bi-sublinear form
\[
B(f, g)= \sum_{Q\in\mathcal S}\langle |f|^{p_0}\rangle_Q^{\frac 1{p_0}} \langle|g|^{q_0'}\rangle_Q^{\frac 1{q_0'}}\lambda_Q,
\]
where  $\mathcal S$ is a sparse family (here we generalize $|Q|$ to general sequence $\lambda_Q$). This is the main object in this paper.

Our first result concern the characterization of two weight norm inequality for the bilinear form.
\begin{Theorem}\label{thm:characterization}
Let $(w,\sigma)$ be a pair of weights and $p_0<p,q<q_0$. Suppose that $\mathcal N$ is the best constant such that the following two weight inequality holds
\begin{equation}\label{eq:norm}
B(f,g)\le \mathcal N \|f\|_{L^p(w)}\|g\|_{L^{q'}(\sigma)}.
\end{equation}
Denote
\begin{eqnarray*}
\tau_Q= \langle u\rangle_Q^{-\frac 1{p_0'}} \langle v\rangle_Q^{-\frac 1{q_0}} \frac{\lambda_Q}{|Q|},\,\, u:= w^{\frac {p_0}{p_0-p}},\,\,   v:=\sigma^{\frac{q_0'}{q_0'-q'}},
\end{eqnarray*}
and
\[
T_F(f)=\sum_{\substack{Q\in\mathcal S\\ Q\subset F}}  \tau_Q \langle f \rangle_Q \mathbf 1_Q.
\]
Then we have
\begin{equation*}
\mathcal N
  \eqsim \mathfrak{T}_s+\mathfrak{T}_s^*,
\end{equation*}
where $s\in(1,\infty]$ is determined by
\begin{equation}\label{eq:rDef}
  \frac{1}{s}=\Big(\frac{1}{q}-\frac{1}{p}\Big)_+
  :=\max\Big\{\frac{1}{q}-\frac{1}{p},0\Big\},
\end{equation}
and
\begin{equation}\label{eq:newTesting}
\begin{split}
  \mathfrak{T}_s & :=\sup_{\mathcal F}\Big \|\Big\{\frac{\|T_{F}(u)\|_{L^{q}(v)}}{u(F)^{1/p}}\Big\}_{F\in\mathcal F} \Big\|_{\ell^s}, \\
  \mathfrak{T}_s^* & :=\sup_{\mathcal{G}}\Big \|\Big\{\frac{\|T_{G}(v)\|_{L^{p'}(u)}}{v(G)^{1/{q'}}}\Big\}_{G\in\mathcal G} \Big\|_{\ell^s},
\end{split}
\end{equation}
and the supremums are taken over all subcollections $\mathcal F$ and $\mathcal{G}$ of $\mathcal{S}$ that are sparse with respect to $u$ and $v$, respectively.
\end{Theorem}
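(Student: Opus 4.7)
The strategy is to reduce the bilinear inequality to a bound for the linear sparse operator $T\phi:=\sum_Q\tau_Q\langle\phi\rangle_Q\mathbf 1_Q$ via a weighted change of variables, and then apply a H\"anninen-type sparse testing characterization. Substituting $f=h\,u^{1/p_0}$ and $g=k\,v^{1/q_0'}$, the definitions of $u,v$ give $\|f\|_{L^p(w)}=\|h\|_{L^p(u)}$, $\|g\|_{L^{q'}(\sigma)}=\|k\|_{L^{q'}(v)}$, $\langle|f|^{p_0}\rangle_Q=\langle u\rangle_Q[h]_{p_0,u,Q}^{p_0}$ and $\langle|g|^{q_0'}\rangle_Q=\langle v\rangle_Q[k]_{q_0',v,Q}^{q_0'}$, where $[h]_{p_0,u,Q}:=(u(Q)^{-1}\int_Q|h|^{p_0}u)^{1/p_0}$. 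Therefore
\[
  B(f,g)=\sum_{Q\in\mathcal S}\tau_Q\,|Q|\,\langle u\rangle_Q\langle v\rangle_Q\,[h]_{p_0,u,Q}\,[k]_{q_0',v,Q},
\]
so the operator $T$ underlying $\mathfrak{T}_s,\mathfrak{T}_s^*$ appears transparently.

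For \emph{sufficiency}, I would run parallel corona decompositions $\mathcal F$ for $h$ (stopping via the $L^{p_0}(u)$-maximal function) and $\mathcal G$ for $k$ (stopping via the $L^{q_0'}(v)$-maximal function), which are sparse with respect to $u,v$ by Carleson embedding. Splitting the sum by the nested pair $(\pi_{\mathcal F}(Q),\pi_{\mathcal G}(Q))$, the contribution vanishes unless $F\supset G$ or $G\supset F$. In the case $G\subset F$, the corona estimates $[h]_{p_0,u,Q}\lesssim[h]_{p_0,u,F}$ and $[k]_{q_0',v,Q}\lesssim[k]_{q_0',v,G}$ reduce the inner sum to $[h]_{p_0,u,F}[k]_{q_0',v,G}\int_G T_G(v)\,u$, which H\"older bounds by $[h]_{p_0,u,F}[k]_{q_0',v,G}\|T_G(v)\|_{L^{p'}(u)}u(G)^{1/p}$. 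Summing over pairs and telescoping $\sum_{F\supset G}[h]_{p_0,u,F}\lesssim[h]_{p_0,u,\pi_{\mathcal F}(G)}$ (a geometric series in the stopping values) reduces everything to
\[
 \sum_{G\in\mathcal G}\bigl([k]_{q_0',v,G}v(G)^{1/q'}\bigr)\bigl([h]_{p_0,u,\pi_{\mathcal F}(G)}u(G)^{1/p}\bigr)\beta_G^*,\quad \beta_G^*:=\|T_G(v)\|_{L^{p'}(u)}/v(G)^{1/q'}.
\]
A triple H\"older in $\ell^{q'}\times\ell^p\times\ell^s$, whose exponent condition $1/q'+1/p+1/s=1$ holds precisely for $s$ as in \eqref{eq:rDef}, combined with the Carleson bounds $\sum_G[k]_{q_0',v,G}^{q'}v(G)\lesssim\|k\|_{L^{q'}(v)}^{q'}$ and $\sum_F[h]_{p_0,u,F}^p u(F)\lesssim\|h\|_{L^p(u)}^p$ (valid as $q'>q_0'$ and $p>p_0$), then delivers $\mathfrak{T}_s^*\|h\|_{L^p(u)}\|k\|_{L^{q'}(v)}$; the symmetric case $F\subset G$ produces $\mathfrak{T}_s$.

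For \emph{necessity}, test with $h=\mathbf 1_F$: since $[h]_{p_0,u,Q}=1$ for $Q\subset F$, expanding the bilinear form and dualizing in $L^{q'}(v)$ via Jensen's inequality $[k]_{q_0',v,Q}\ge[k]_{1,v,Q}$ yields the single-cube Sawyer-type bound $\|T_F(u)\|_{L^q(v)}\le\mathcal N u(F)^{1/p}$. To upgrade this to an $\ell^s$-bound over a $u$-sparse subcollection $\mathcal F$, test with $h=\sum_{F\in\mathcal F}\alpha_F u(F)^{-1/p}\mathbf 1_{E_F}$, where $E_F$ are the disjoint sparse cores of $\mathcal F$ and $(\alpha_F)\in\ell^{s'}$ is extremal dual to $(\beta_F)$ with $\beta_F:=\|T_F(u)\|_{L^q(v)}/u(F)^{1/p}$; disjointness and sparseness bound $\|h\|_{L^p(u)}\lesssim\|(\alpha_F)\|_{\ell^p}\le\|(\alpha_F)\|_{\ell^{s'}}$ (the last inclusion using $p\ge s'$, equivalent to $q\ge 1$), and pairing with an appropriately combined $k$ produces $\mathfrak{T}_s\lesssim\mathcal N$; the bound for $\mathfrak{T}_s^*$ is symmetric. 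The main obstacle I expect is the off-diagonal case $p>q$: balancing the $u$- and $v$-sparseness of the combined test functions to extract precisely the $\ell^s$-exponent requires the careful simultaneous arrangement in H\"anninen's scheme for the linear sparse operator.
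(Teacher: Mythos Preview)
Your change of variables $f=h\,u^{1/p_0}$, $g=k\,v^{1/q_0'}$ is exactly the paper's starting point, but from there the two proofs diverge. The paper does not run a corona decomposition at all. Instead, it observes that the best constant in
\[
\sum_{Q\in\mathcal S}[h]_{p_0,u,Q}\,[k]_{q_0',v,Q}\,\langle u\rangle_Q^{1/p_0}\langle v\rangle_Q^{1/q_0'}\lambda_Q\le C\|h\|_{L^p(u)}\|k\|_{L^{q'}(v)}
\]
is \emph{equivalent} to the best constant in the same inequality with $[h]_{p_0,u,Q},[k]_{q_0',v,Q}$ replaced by the plain weighted averages $\langle h\rangle_Q^u,\langle k\rangle_Q^v$. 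One direction is H\"older; the other uses the pointwise domination $[h]_{p_0,u,Q}\le\inf_Q M_{p_0,u}^{\mathcal S}(h)\le\langle M_{p_0,u}^{\mathcal S}(h)\rangle_Q^u$ together with the boundedness of $M_{p_0,u}^{\mathcal S}$ on $L^p(u)$ (valid since $p>p_0$). After this linearization the problem is literally the two-weight $L^p(u)\to L^q(v)$ bound for the positive dyadic operator $T_\tau$, and the paper simply quotes the H\"anninen--Hyt\"onen--Li characterization (Proposition~\ref{prop:char}) as a black box. So the paper's proof is two lines plus a citation; the maximal-function equivalence is the whole content.

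Your route---keeping the $p_0,q_0'$ averages throughout and redoing the parallel corona/stopping argument---is viable and essentially reproves HHL in this setting. Two small warnings: (i) your triple-H\"older identity $1/q'+1/p+1/s=1$ holds only when $p\ge q$; for $p<q$ one has $s=\infty$ and $1/q'+1/p>1$, so you need the inclusion $\ell^p\hookrightarrow\ell^q$ on the $h$-factor instead. (ii) In the necessity upgrade to $\ell^s$, the test function $h=\sum_F\alpha_F u(F)^{-1/p}\mathbf 1_{E_F}$ makes $[h]_{p_0,u,Q}$ pick up contributions from all $E_{F'}$ with $F'\subset Q$, not just $\alpha_F$; the clean way out is to invoke Jensen $[h]_{p_0,u,Q}\ge\langle h\rangle_Q^u$ at this point (as you already do for $k$), which is precisely the linearization the paper performs globally. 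What the paper's approach buys is modularity: the $p_0,q_0'$ parameters disappear after one step, and the off-diagonal case $p>q$ is handled entirely inside the cited result rather than by hand.
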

For the case of $p_0=1, q_0=\infty$, this result is already known in \cite{LSU} for $p\le q$ and \cite{tanaka2014} for $p>q$, see also in \cite{HHL} for an unified approach for both cases. In this sense, our result extends from  the special case $p_0=1, q_0=\infty$ to general cases.
Next, we are concerned with the mixed $A_p$-$A_\infty$ type estimate.
\begin{Theorem}\label{thm:sharpconstant}
Let $(w,\sigma)$ be a pair of weights, $\lambda_Q=|Q|$, $p=q$ and  $\mathcal N$ be the best constant such that \eqref{eq:norm} holds.
Then
\[
\mathcal N\lesssim [v]_{A_r}^{\frac 1{q_0'}-\frac 1{p'}}([u]_{A_\infty}^{\frac 1p}+ [v]_{A_\infty}^{\frac 1{p'}}),
\]
where
\[
r=\Big(\frac {q_0}p\Big)'(\frac p{p_0}-1)+1.
\]
\end{Theorem}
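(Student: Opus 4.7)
The plan is to derive Theorem \ref{thm:sharpconstant} as a direct consequence of the characterization in Theorem \ref{thm:characterization}. Since $p=q$, the exponent $s$ defined by \eqref{eq:rDef} is $\infty$, so Theorem \ref{thm:characterization} gives $\mathcal{N}\eqsim\mathfrak{T}_\infty+\mathfrak{T}_\infty^{*}$ with
\[
\mathfrak{T}_\infty=\sup_{F}\frac{\|T_F(u)\|_{L^p(v)}}{u(F)^{1/p}},\qquad \mathfrak{T}_\infty^{*}=\sup_{G}\frac{\|T_G(v)\|_{L^{p'}(u)}}{v(G)^{1/p'}}.
\]
I would bound each of these by the right-hand side of Theorem \ref{thm:sharpconstant}. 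Because the two testing functionals are interchanged by swapping $(u,v)$ and $(p,p')$, it suffices to treat $\mathfrak{T}_\infty$; the bound on $\mathfrak{T}_\infty^{*}$ then follows by the same argument with the roles reversed, which is why both $[u]_{A_\infty}$ and $[v]_{A_\infty}$ appear in the final estimate.

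To bound $\|T_F(u)\|_{L^p(v)}^p$, I would first dualize it as the supremum over $h\ge 0$ with $\|h\|_{L^{p'}(v)}=1$ of $\int T_F(u)\,h\,v\,dx$. Expanding $T_F$ and regrouping the factors of $\tau_Q$ produces the sparse bilinear form
\[
\sum_{Q\subset F}\langle u\rangle_Q^{1/p_0}\langle v\rangle_Q^{1/q_0'}\,\langle h\rangle_Q^{v}\,|Q|, \qquad \langle h\rangle_Q^{v}:=\frac{1}{v(Q)}\int_Q h\,v\,dx.
\]
The key algebraic step is to split the exponents as $1/p_0=1/p+(1/p_0-1/p)$ and $1/q_0'=1/p'+(1/q_0'-1/p')$, so that the residual factor to be controlled is $\langle u\rangle_Q^{1/p_0-1/p}\langle v\rangle_Q^{1/q_0'-1/p'}$. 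An application of H\"older's inequality, combined with the $A_r$ condition $\langle v\rangle_Q\langle v^{1/(1-r)}\rangle_Q^{r-1}\le [v]_{A_r}$, is then designed to majorise this residual uniformly in $Q$ by a constant multiple of $[v]_{A_r}^{1/q_0'-1/p'}$. The value $r=(q_0/p)'(p/p_0-1)+1$ is forced precisely so that the H\"older exponents balance and any stray power of $\langle u\rangle_Q$ is absorbed against the surviving factor $\langle u\rangle_Q^{1/p}\langle v\rangle_Q^{1/p'}$. After this step one is left with the classical sparse form $\sum_{Q\subset F}\langle u\rangle_Q^{1/p}\langle v\rangle_Q^{1/p'}\langle h\rangle_Q^{v}|Q|$.

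The final step is to bound this remaining sum by the standard two-weight machinery for sparse operators. I would perform a double principal-cube construction: one stopping family adapted to $\langle u\rangle_Q$ (producing a $u$-sparse collection) and one adapted to $\langle h\rangle_Q^{v}$ (producing a $v$-sparse collection). Between consecutive stopping generations the relevant averages are essentially constant, so the inner sums collapse on principal cubes $P$ to quantities of the form $\langle u\rangle_P\langle h\rangle_P^{v}\,v(P)$. The two resulting Carleson sums are then controlled by the Hyt\"onen--P\'erez version of the Carleson embedding theorem, which contributes $[u]_{A_\infty}^{1/p}$ and $[v]_{A_\infty}^{1/p'}$ respectively, multiplied by $u(F)^{1/p}\|h\|_{L^{p'}(v)}=u(F)^{1/p}$. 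Dividing by $u(F)^{1/p}$ and taking the supremum over $F$ yields the claimed bound on $\mathfrak{T}_\infty$, and the parallel argument handles $\mathfrak{T}_\infty^{*}$.

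The main obstacle is the balancing step in the second paragraph: one must verify that the specific value of $r$ in the statement is exactly the one that lets H\"older's inequality convert the residue $\langle u\rangle_Q^{1/p_0-1/p}\langle v\rangle_Q^{1/q_0'-1/p'}$ into a fractional power of the $A_r$ expression $\langle v\rangle_Q\langle v^{1/(1-r)}\rangle_Q^{r-1}$ without leaving behind stray factors depending on $Q$. Once this algebraic identity is in place, the rest of the proof follows the now standard pattern of principal cubes plus $A_\infty$-sharp Carleson embedding, and the bookkeeping of the exponents $1/p$ and $1/p'$ matches the appearance of $[u]_{A_\infty}^{1/p}+[v]_{A_\infty}^{1/p'}$ in the final estimate.
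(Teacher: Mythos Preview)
Your reduction to the Sawyer testing conditions is correct, and so is the algebraic observation that
\[
\langle u\rangle_Q^{\frac1{p_0}-\frac1p}\langle v\rangle_Q^{\frac1{q_0'}-\frac1{p'}}
=\bigl(\langle v\rangle_Q\langle u\rangle_Q^{r-1}\bigr)^{\frac1{q_0'}-\frac1{p'}}
\le [v,u]_{A_r}^{\frac1{q_0'}-\frac1{p'}},
\]
with exactly the value of $r$ in the statement. The gap is in the next step. After you pull out the full power of $[v,u]_{A_r}$ uniformly in $Q$, the remaining dual pairing is
\[
\sum_{Q\subset F} u(Q)^{1/p}v(Q)^{1/p'}\langle h\rangle_Q^{v},
\]
and you need to bound it by a constant depending only on $[u]_{A_\infty}$ times $u(F)^{1/p}\|h\|_{L^{p'}(v)}$. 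No principal-cube or Carleson argument does this: whatever splitting you choose, the factor $v(Q)^{1/p'}$ forces a Carleson condition $\sum_{Q\subset R}v(Q)\lesssim v(R)$, which costs $[v]_{A_\infty}$ because $\mathcal S$ is Lebesgue-sparse, not $v$-sparse. Concretely, H\"older gives
\[
\Big(\sum_{Q\subset F}u(Q)\Big)^{1/p}\Big(\sum_{Q\subset F}v(Q)(\langle h\rangle_Q^{v})^{p'}\Big)^{1/p'}
\lesssim [u]_{A_\infty}^{1/p}[v]_{A_\infty}^{1/p'}\,u(F)^{1/p}\|h\|_{L^{p'}(v)},
\]
so your scheme yields the \emph{product} $[v,u]_{A_r}^{1/q_0'-1/p'}[u]_{A_\infty}^{1/p}[v]_{A_\infty}^{1/p'}$, strictly weaker than the additive bound claimed. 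The sum $[u]_{A_\infty}^{1/p}+[v]_{A_\infty}^{1/p'}$ in the theorem is not produced inside a single testing condition; it arises because each testing condition contributes only \emph{one} $A_\infty$ factor.

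The paper avoids this by not extracting $[v,u]_{A_r}$ all at once. It first rewrites $\|T_R(v)\|_{L^{p'}(u)}$ via the Cascante--Ortega--Verbitsky equivalence (Proposition~\ref{dyadicsum}), obtaining an outer sum over $Q$ with an inner sum over $Q'\subset Q$. From the inner sum it extracts only a \emph{partial} power $[v,u]_{A_r}^{\alpha/p}$, with $\alpha$ chosen so that the leftover exponents of $\langle u\rangle_{Q'}$ and $\langle v\rangle_{Q'}$ satisfy the hypothesis of the Kolmogorov-type Proposition~\ref{kolmogorov}; that proposition then collapses the inner sum to a single term in $Q$. A second extraction from the outer sum reduces everything to $\sum_{Q\subset R}v(Q)\lesssim [v]_{A_\infty}v(R)$, so this testing picks up only $[v]_{A_\infty}^{1/p'}$. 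The dual testing, by symmetry, picks up only $[u]_{A_\infty}^{1/p}$, and adding the two gives the claimed sum. The two-stage extraction with Proposition~\ref{kolmogorov} in between is the missing idea in your proposal.
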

We point out Theorem \ref{thm:sharpconstant} improves the main result in \cite{BFP}. Indeed,
notice that Bernicot, Frey and Petermichl showed that, if $\sigma=w^{1-p'}$, i.e., $u=v^{1-r'}$, then
\[
\mathcal N\lesssim [v]_{A_r}^{\max\{\frac 1{q_0'}, \frac 1{p_0(r-1)}\}}.
\]
Since
\[
[u]_{A_\infty}^{\frac 1p}\le [u]_{A_{r'}}^{\frac 1p}=[v]_{A_r}^{\frac 1{p(r-1)}},\,\,\mbox{and}\,\, [v]_{A_\infty}^{\frac 1{p'}}\le [v]_{A_r}^{\frac 1{p'}},
\]
we have
\[
[v]_{A_r}^{\frac 1{q_0'}-\frac 1{p'}}([u]_{A_\infty}^{\frac 1p}+ [v]_{A_\infty}^{\frac 1{p'}})\le [v]_{A_r}^{\frac 1{p_0(r-1)}}+ [v]_{A_r}^{\frac 1{q_0'}}\le 2[v]_{A_r}^{\max\{\frac 1{q_0'}, \frac 1{p_0(r-1)}\}}.
\]

Finally we study the one supremum estimate. In \cite{HL2012}, Hyt\"onen and Lacey proposed the following one supremum conjecture, which is an extension of the $A_p$-$A_\infty$ estimate.
\begin{Conjecture}
Let $T$ be a Calder\'on-Zygmund operator, $w,\sigma$ be weights and $1<p<\infty$. Then
\[
\|T(\cdot\sigma)\|_{L^p(\sigma)\rightarrow L^p(w)}\lesssim \sup_Q \langle w\rangle_Q^{\frac 1p}\langle \sigma\rangle_Q^{\frac 1{p'}}(A_\infty (w,Q)^{\frac 1{p'}}+A_\infty (\sigma,Q)^{\frac 1p}),
\]
where for any weight $v$,
\[
 A_\infty (v,Q):= \frac{1}{v(Q)}\int_Q M(v\chi_Q)(x) dx.
\]
\end{Conjecture}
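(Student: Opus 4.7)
The plan is to reduce the one-supremum conjecture, via Lerner's sparse domination for Calder\'on-Zygmund operators, to a two-weight estimate for the sparse bilinear form with $p_0=1$ and $q_0=\infty$, and then invoke Theorem \ref{thm:characterization}. Writing the operator in dual form, domination yields
\[
|\langle T(f\sigma),gw\rangle|\lesssim \sum_{Q\in\mathcal S}\langle |f\sigma|\rangle_Q\langle |gw|\rangle_Q|Q|,
\]
and after the substitutions $F=f\sigma$, $G=gw$ this matches the bilinear form $B(F,G)$ of the paper, with $p_0=1$, $q_0=\infty$, $p=q$, and test-weights $\sigma^{1-p}$, $w^{1-p'}$ in the slots of $w$ and $\sigma$. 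A direct computation shows the paper's derived weights $u$ and $v$ collapse back to the original $\sigma$ and $w$, that $\tau_Q\equiv 1$, and that $T_F$ coincides with the classical sparse operator $\sum_{Q\in\mathcal S,\, Q\subset F}\langle\cdot\rangle_Q\mathbf 1_Q$.

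Since $p=q$, \eqref{eq:rDef} forces $s=\infty$, so the $\ell^s$ norms in \eqref{eq:newTesting} degenerate to plain suprema and no sparse subfamilies need to be tested. Thus Theorem \ref{thm:characterization} reduces the conjecture to establishing
\[
\sup_{F\in\mathcal S}\frac{\|T_F(\sigma)\|_{L^p(w)}}{\sigma(F)^{1/p}}+\sup_{G\in\mathcal S}\frac{\|T_G(w)\|_{L^{p'}(\sigma)}}{w(G)^{1/p'}}\lesssim\mathfrak K,
\]
where $\mathfrak K$ denotes the one-supremum quantity in the conjecture. By the symmetry between $w$ and $\sigma$ it suffices to handle the first term.

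To attack this testing inequality I would mimic the Corona/stopping-time scheme behind Theorem \ref{thm:sharpconstant}. Fix $F$ and build layered stopping families $\mathcal E^\sigma$, $\mathcal E^w$ inside $F$, halting whenever $\langle\sigma\rangle$ or $\langle w\rangle$ doubles on a descendant. On each stopping tree the two weights behave as essentially constant, and the $A_\infty$ characteristics control the overlap via the standard Carleson embedding. Expanding $\|T_F(\sigma)\|_{L^p(w)}^p$, grouping sparse cubes by their nearest stopping ancestors, and using the pointwise bound $\langle w\rangle_Q^{1/p}\langle\sigma\rangle_Q^{1/p'}(A_\infty(w,Q)^{1/p'}+A_\infty(\sigma,Q)^{1/p})\le\mathfrak K$ at each such ancestor should, ideally, collapse the sum to $\sigma(F)\mathfrak K^p$.

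The main obstacle---and the reason the conjecture remains open---is that $\mathfrak K$ is a pointwise supremum of a four-factor product in which the two factors $A_\infty(w,Q)$ and $A_\infty(\sigma,Q)$ are localized to the \emph{same} cube $Q$. In the stopping-time expansion, the $A_\infty$ data extracted at a stopping cube $E$ refers to $A_\infty$ on $E$, not on the smaller cubes $Q\subset E$ that actually contribute at a given point; the Carleson sums one generates therefore fail to telescope against the cube-by-cube $A_\infty$ pieces hidden in $\mathfrak K$. Bridging this scale mismatch between the joint-scale $A_p$ piece and the two local $A_\infty$ pieces is precisely the difficulty that the new conjecture in Section \ref{sec:5} is designed to sidestep while still implying the one-supremum conjecture.
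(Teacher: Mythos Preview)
The statement you are addressing is a \emph{Conjecture}, not a theorem; the paper does not prove it and explicitly says ``As far as we know, this conjecture is still open.'' So there is no proof in the paper to compare against. Your proposal is likewise not a proof: you carry out the reduction to Sawyer-type testing via Theorem~\ref{thm:characterization} and then, correctly, identify the obstruction that prevents the argument from closing.

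Your reduction is exactly the one the paper uses for its \emph{partial} results (Theorems~\ref{thm:onesupremum} and~\ref{thm:exp}): in the case $p_0=1$, $q_0=\infty$, $p=q$, the characterization collapses to the two testing suprema in \eqref{eq:sawyer}, and one must bound $\|T_R(\sigma)\|_{L^p(w)}$ by $\mathfrak K\,\sigma(R)^{1/p}$. The paper's workaround is not a stopping-time decomposition on both weights as you sketch, but rather a stratification of $\mathcal S$ into level sets $\mathcal S_a=\{Q: 2^a\le A_\infty(v,Q)<2^{a+1}\}$, followed by Propositions~\ref{kolmogorov} and~\ref{dyadicsum} on each stratum. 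This produces a bound of the form $\sum_{a\ge 0}\phi(2^a)^{-1}$ times the one-supremum quantity, which is why the partial results carry the extra $\phi,\psi$ factors with an integrability hypothesis: that is precisely the price paid for summing over the $A_\infty$ levels. Without the $\phi$ decay the sum over $a$ diverges, which is another way of phrasing the obstacle you describe. Your ``scale mismatch'' diagnosis is compatible with this, though the paper localizes the difficulty as a failure of summability over $A_\infty$ strata rather than as a Corona mismatch.
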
 Here we give some partial answer to this conjecture for the general indices $1\le p_0<q_0\le \infty$. Define
\[
A_r(v,u,Q)= \langle v\rangle_Q^{\frac 1 p-\frac 1{q_0}}\langle u\rangle_Q^{(\frac 1p-\frac 1{q_0})(r-1)},
\]
and
\begin{align*}
[v,u]_{(A_r) (\phi(A_\infty)) }&:= \sup_Q A_r(v,u,Q)(A_\infty (v,Q))^{\frac 1{p'}} \phi(A_\infty (v,Q));  \\
[u, v]_{(A_{r'}) (\psi(A_\infty)) } &:= \sup_Q A_{r'}(u,v,Q)(A_\infty (u,Q))^{\frac 1p} \psi(A_\infty (u,Q)).
\end{align*}
We have the following one supremum estimate
\begin{Theorem}\label{thm:onesupremum}
Let $(w,\sigma)$ be a pair of weights, $\lambda_Q=|Q|$, $p=q$ and $\mathcal N$ be the best constant such that \eqref{eq:norm} holds. Let $\phi,\psi$ be  increasing functions such that
\[
\int_{1/2}^\infty\Big(\frac {1}{ \phi(t) } + \frac {1}{ \psi(t)}\Big)\frac{dt}t<\infty.
\]
Then
\[
\mathcal N\lesssim [v,u]_{(A_r) (\phi(A_\infty)) }+ [u,v]_{(A_{r'}) (\psi(A_\infty)) },
\]
\end{Theorem}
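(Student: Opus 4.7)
The plan is to invoke Theorem \ref{thm:characterization} with $p=q$, so that $s=\infty$ and both testing quantities simplify to
\[
\mathfrak{T}_\infty = \sup_{F\in\mathcal S}\frac{\|T_F(u)\|_{L^p(v)}}{u(F)^{1/p}},\qquad \mathfrak{T}_\infty^* = \sup_{G\in\mathcal S}\frac{\|T_G(v)\|_{L^{p'}(u)}}{v(G)^{1/p'}},
\]
since any singleton collection is trivially sparse with respect to any measure. The two quantities are exchanged by the involution $(u,v,p,p_0,q_0)\mapsto(v,u,p',q_0',p_0')$, which also sends $r$ to $r'$ and the roles of $\phi$ and $\psi$; it therefore suffices to establish $\mathfrak{T}_\infty\lesssim [v,u]_{(A_r)(\phi(A_\infty))}$, the dual estimate being formally identical.

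Fix $F\in\mathcal S$. The strategy is to bucket $\mathcal S\cap\{Q\subset F\}$ dyadically by the per-cube value of $A_\infty(v,\cdot)$. Set
\[
\mathcal S^F_m := \bigl\{Q\in\mathcal S:\ Q\subset F,\ A_\infty(v,Q)\in[2^{m-1},2^m)\bigr\},\qquad m\geq 1,
\]
and decompose $T_F(u)=\sum_{m\geq 1}T_F^{(m)}(u)$, where $T_F^{(m)}$ is $T_F$ restricted to $\mathcal S^F_m$. The technical heart of the argument is the localized mixed estimate
\[
\|T_F^{(m)}(u)\|_{L^p(v)} \lesssim 2^{m/p'}\Bigl(\sup_{Q\in\mathcal S^F_m}A_r(v,u,Q)\Bigr) u(F)^{1/p},
\]
to be obtained by running the principal-cube / Carleson embedding argument behind Theorem \ref{thm:sharpconstant} on the restricted family $\mathcal S^F_m$, with the global factor $[v]_{A_\infty}^{1/p'}$ replaced by the level value $2^{m/p'}$ and the global $A_r$ constant replaced by a supremum over $Q\in\mathcal S^F_m$.

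Granting this local estimate, the definition of $[v,u]_{(A_r)(\phi(A_\infty))}$ together with the monotonicity of $\phi$ gives, for each $Q\in\mathcal S^F_m$,
\[
A_r(v,u,Q)\leq \frac{[v,u]_{(A_r)(\phi(A_\infty))}}{A_\infty(v,Q)^{1/p'}\phi(A_\infty(v,Q))}\leq \frac{2^{1/p'}\,[v,u]_{(A_r)(\phi(A_\infty))}}{2^{m/p'}\phi(2^{m-1})}.
\]
Substituting cancels the factor $2^{m/p'}$, and the triangle inequality in $m$ yields
\[
\|T_F(u)\|_{L^p(v)} \lesssim [v,u]_{(A_r)(\phi(A_\infty))}\,u(F)^{1/p}\sum_{m\geq 1}\frac{1}{\phi(2^{m-1})}.
\]
Monotonicity of $\phi$ and the change of variables $t=2^s$ identify the last sum, up to a constant, with $\int_{1/2}^\infty \phi(t)^{-1}\,dt/t$, which is finite by hypothesis. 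Dividing by $u(F)^{1/p}$ and taking the supremum in $F$ yields $\mathfrak{T}_\infty\lesssim[v,u]_{(A_r)(\phi(A_\infty))}$; the bound for $\mathfrak{T}_\infty^*$ follows by the same argument with $\psi$ and the roles of $u,v$ swapped.

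The main obstacle is the localized mixed bound on each bucket $\mathcal S^F_m$. The argument behind Theorem \ref{thm:sharpconstant} presumably proceeds through a principal-cube decomposition for $u$ inside $F$ together with a Carleson embedding whose constants degrade through powers of the \emph{global} $[v]_{A_\infty}$; the challenge is to re-examine those Carleson packing constants on the restricted bucket $\mathcal S^F_m$ and replace the global $[v]_{A_\infty}$ loss by the level-value $2^m$ alone. Once this one-bucket refinement is in place, the $\phi$-entropy summation above converts it into the announced one-supremum estimate.
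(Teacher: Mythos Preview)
Your overall plan---reduce to Sawyer testing via Theorem~\ref{thm:characterization}, bucket the sparse family by the level of the per-cube $A_\infty$ quantity, localize the argument behind Theorem~\ref{thm:sharpconstant} to each bucket, and sum using the integrability of $1/\phi$---is exactly the paper's route. But you have paired each testing condition with the wrong one-supremum quantity, and this makes your key ``localized mixed estimate'' unprovable by the method you invoke.

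Concretely: the proof of Theorem~\ref{thm:sharpconstant} (which does not use principal cubes, but rather Proposition~\ref{dyadicsum} of Cascante--Ortega--Verbitsky together with the Kolmogorov-type Proposition~\ref{kolmogorov}) terminates the estimate of $\|T_R(u)\|_{L^p(v)}$ with the Carleson sum $\bigl(\sum_{Q\subset R}u(Q)\bigr)^{1/p}$, which is controlled by $[u]_{A_\infty}^{1/p}u(R)^{1/p}$; dually, $\|T_R(v)\|_{L^{p'}(u)}$ ends with $\bigl(\sum_{Q\subset R}v(Q)\bigr)^{1/p'}\lesssim [v]_{A_\infty}^{1/p'}v(R)^{1/p'}$. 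Thus the $A_\infty$ loss in $\mathfrak T_\infty$ is governed by $u$, not $v$. If you bucket $\{Q\subset F\}$ by $A_\infty(v,Q)\in[2^{m-1},2^m)$ as you propose, the terminal sum is still $\sum_{Q\in\mathcal S^F_m}u(Q)$, and the bucket condition on $v$ gives no control of this quantity; your claimed bound $\|T_F^{(m)}(u)\|_{L^p(v)}\lesssim 2^{m/p'}\sup_{\mathcal S^F_m}A_r(v,u,\cdot)\,u(F)^{1/p}$ simply does not follow. The correct pairing is
\[
\mathfrak T_\infty^*\lesssim [v,u]_{(A_r)(\phi(A_\infty))}\quad\text{and}\quad \mathfrak T_\infty\lesssim [u,v]_{(A_{r'})(\psi(A_\infty))},
\]
obtained by bucketing $\|T_R(v)\|_{L^{p'}(u)}$ according to $A_\infty(v,\cdot)$ and $\|T_R(u)\|_{L^p(v)}$ according to $A_\infty(u,\cdot)$, respectively. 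With that swap, your outline goes through verbatim: on the bucket $\mathcal S_a=\{Q:A_\infty(v,Q)\in[2^a,2^{a+1})\}$ one replaces each occurrence of the local $A_r$ quantity by $[v,u]_{(A_r)(\phi(A_\infty))}/(2^{a/p'}\phi(2^a))$, runs the computation of Theorem~\ref{thm:sharpconstant} unchanged to reach $\bigl(\sum_{Q\in\mathcal S_a,\,Q\subset R}v(Q)\bigr)^{1/p'}$, and then uses that the maximal cubes $Q^*$ of $\mathcal S_a$ inside $R$ satisfy $\sum_{Q\in\mathcal S,\,Q\subset Q^*}v(Q)\le 2A_\infty(v,Q^*)\,v(Q^*)\le 2^{a+2}v(Q^*)$ to absorb the factor $2^{-a/p'}$. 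Your summation-in-$a$ step is then exactly right.
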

For the case of $p_0=1$ and $q_0=\infty$, this was shown by Lacey and Spencer in \cite{LS}, which was referred to as the separated entropy bounds.
Now consider another definition of $A_\infty$ weights, namely,
\[
[w]_{A_\infty}^{\textup{exp}}:= \sup_{Q} \langle w\rangle_Q \exp ( \langle\log w^{-1}\rangle_Q  ).
\]
It is showed in \cite{HP} that
\[
[w]_{A_\infty}\le c_n [w]_{A_\infty}^{\textup{exp}}.
\]
We shall show that, if we replace $A_{\infty}$ with $A_{\infty}^{\textup{exp}}$, then we can relax the decay condition of $\phi$ and $\psi$ slightly. To be precise, define
 \[
 A_\infty^{\textup{exp}} (v,Q):= \langle v\rangle_Q \exp ( \langle\log v^{-1}\rangle_Q  )
 \]
 and
\begin{align*}
[v,u]_{(A_r) (\Phi(A_\infty^{\textup{exp}})) }&:= \sup_Q A_r(v,u,Q) (A_\infty^{\textup{exp}}(v,Q))^{\frac 1{p'}} \Phi(A_\infty^{\textup{exp}} (v,Q));  \\
[u, v]_{(A_{r'}) (\Psi(A_\infty^{\textup{exp}})) } &:= \sup_Q A_{r'}(u,v,Q) (A_\infty^{\textup{exp}}(u,Q))^{\frac 1{p}} \Psi(A_\infty^{\textup{exp}} (u,Q)).
\end{align*}
We have the following result
\begin{Theorem}\label{thm:exp}
Let $(w,\sigma)$ be a pair of weights, $\lambda_Q=|Q|$, $p=q$ and $\mathcal N$ be the best constant such that \eqref{eq:norm} holds. Let $\Phi,\Psi$ be  increasing functions such that
\[
\int_{1/2}^\infty\Big(\frac {1}{\Phi(t)^{ p }} + \frac {1}{\Psi(t)^{ p' }}\Big)\frac{dt}t<\infty.
\]
Then
\[
\mathcal N\lesssim [v,u]_{(A_r) (\Phi(A_\infty^{\textup{exp}})) }+ [u,v]_{(A_{r'}) (\psi(A_\infty^{\textup{exp}})) },
\]
\end{Theorem}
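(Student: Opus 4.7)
The plan is to deduce Theorem~\ref{thm:exp} from the characterization in Theorem~\ref{thm:characterization}. Since $p=q$ we have $s=\infty$, so we only need to bound the two testing sums
\[
\mathfrak{T}_\infty = \sup_{F}\frac{\|T_F(u)\|_{L^p(v)}}{u(F)^{1/p}}, \qquad \mathfrak{T}_\infty^* = \sup_{G}\frac{\|T_G(v)\|_{L^{p'}(u)}}{v(G)^{1/p'}}
\]
by the two entropy quantities on the right-hand side of the theorem. The statement is symmetric under the swap $(p,u,v,r)\leftrightarrow(p',v,u,r')$, so it suffices to control $\mathfrak{T}_\infty$ by $[v,u]_{(A_r)(\Phi(A_\infty^{\textup{exp}}))}$; the bound on $\mathfrak{T}_\infty^*$ will follow from the same argument with $p$ replaced by $p'$, which is precisely why the hypothesis on the $u$-side involves $1/\Psi^{p'}$ rather than $1/\Psi^p$.

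The strategy is a refinement of the argument for Theorem~\ref{thm:onesupremum}. Fix a sparse $\mathcal F\subset\mathcal S$ and $F\in\mathcal F$, and split the admissible cubes by dyadic level of $A_\infty^{\textup{exp}}(v,\cdot)$:
\[
\mathcal S^{(k)}=\{Q\in\mathcal S:Q\subset F,\ 2^k\le A_\infty^{\textup{exp}}(v,Q)<2^{k+1}\},\qquad k\ge-1.
\]
The main technical input is the sharp reverse H\"older inequality for weights with controlled $A_\infty^{\textup{exp}}$ characteristic (a consequence of Hyt\"onen--P\'erez~\cite{HP} via $[v]_{A_\infty}\le c_n[v]_{A_\infty}^{\textup{exp}}$): for $Q\in\mathcal S^{(k)}$ there exists $\delta_k\approx 2^{-k}$ with $\langle v^{1+\delta_k}\rangle_Q^{1/(1+\delta_k)}\lesssim\langle v\rangle_Q$. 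Using this I would extract from $\tau_Q\langle u\rangle_Q$ the factor $A_\infty^{\textup{exp}}(v,Q)^{1/p'}\Phi(A_\infty^{\textup{exp}}(v,Q))\approx 2^{k/p'}\Phi(2^k)$ and absorb it into $[v,u]_{(A_r)(\Phi(A_\infty^{\textup{exp}}))}$, leaving an average that feeds a standard $L^p$-Carleson/sparse embedding and produces
\[
\Big\|\sum_{Q\in\mathcal S^{(k)}}\tau_Q\langle u\rangle_Q \mathbf 1_Q\Big\|_{L^p(v)}^p
\lesssim\frac{[v,u]_{(A_r)(\Phi(A_\infty^{\textup{exp}}))}^p}{\Phi(2^k)^p}\,u(F).
\]
The crucial feature is that the factor $\Phi(2^k)^{-1}$ appears \emph{inside} the $L^p$-norm, so the $p$-th power passes to $\Phi$.

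Finally I would assemble the level estimates. A naive Minkowski across the levels $k$ would only give $\sum_k 1/\Phi(2^k)$, which is the stronger hypothesis of Theorem~\ref{thm:onesupremum}. To reach the weaker hypothesis $\int 1/\Phi^p\,dt/t<\infty$ I would run an additional corona/stopping-time decomposition on $\langle u\rangle$ (as in Lacey--Spencer~\cite{LS}) so that the contributions at distinct levels are supported on essentially disjoint stopping cubes; this converts the sum over $k$ into an $\ell^p$-sum, yielding
\[
\|T_F(u)\|_{L^p(v)}^p\lesssim [v,u]_{(A_r)(\Phi(A_\infty^{\textup{exp}}))}^p\, u(F)\sum_{k\ge-1}\frac{1}{\Phi(2^k)^p},
\]
and the dyadic sum is comparable to $\int_{1/2}^\infty\Phi(t)^{-p}\,dt/t<\infty$. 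Dividing by $u(F)$ and taking $p$-th roots gives the desired bound on $\mathfrak{T}_\infty$, and the dual argument handles $\mathfrak{T}_\infty^*$.

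The main obstacle is the last step: making sure that the $k$-pieces really combine in $\ell^p$ rather than $\ell^1$. This is achieved by choosing the stopping times carefully so that the $\Phi(A_\infty^{\textup{exp}})$ factor is harvested once per corona root, and the coronas themselves satisfy a Carleson packing condition that turns the aggregate bound into $\sum u(F_j)\cdot\Phi(2^{k_j})^{-p}$. Once this disjointification is in place the remaining estimates are routine manipulations of sparse averages, the reverse H\"older inequality and the defining inequalities of $A_r(v,u,Q)$ and $A_\infty^{\textup{exp}}(v,Q)$.
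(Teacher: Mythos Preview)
Your plan has a substantive mispairing and also misses the simple mechanism that actually yields the $\ell^p$ (rather than $\ell^1$) summation over entropy levels.

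\textbf{Wrong pairing of testing constant and entropy bump.} You propose to bound $\mathfrak T_\infty=\sup_F\|T_F(u)\|_{L^p(v)}/u(F)^{1/p}$ by $[v,u]_{(A_r)(\Phi(A_\infty^{\textup{exp}}))}$, splitting cubes by the level of $A_\infty^{\textup{exp}}(v,\cdot)$. But after one runs the Cascante--Ortega--Verbitsky reduction (Proposition~\ref{dyadicsum}) and the Kolmogorov-type bound (Proposition~\ref{kolmogorov}) exactly as in Theorem~\ref{thm:sharpconstant}, the expression for $\|T_F(u)\|_{L^p(v)}$ collapses to $\big(\sum_{Q\subset F} u(Q)\big)^{1/p}$, and controlling this by $u(F)^{1/p}$ requires the $A_\infty$ behaviour of $u$, not of $v$. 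There is no natural way for a reverse H\"older inequality on $v$ to produce the Carleson packing $\sum u(Q)\lesssim u(F)$. The correct pairing is the opposite one: $\|T_R(v)\|_{L^{p'}(u)}/v(R)^{1/p'}$ is what is governed by $[v,u]_{(A_r)(\Phi(A_\infty^{\textup{exp}}))}$ and the hypothesis $\int \Phi^{-p}\,dt/t<\infty$; the direct testing $\|T_R(u)\|_{L^p(v)}/u(R)^{1/p}$ pairs with $[u,v]_{(A_{r'})(\Psi(A_\infty^{\textup{exp}}))}$ and $\int \Psi^{-p'}\,dt/t<\infty$.

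\textbf{The $\ell^p$ summation comes from a much simpler trick.} The route you sketch (sharp reverse H\"older plus an extra corona so that the level pieces are essentially disjoint) is both vague at the crucial step and unnecessary. Once one is at the stage
\[
\|T_R(v)\|_{L^{p'}(u)}\lesssim\sum_{a\ge 0}\frac{[v,u]_{(A_r)(\Phi(A_\infty^{\textup{exp}}))}}{2^{a/p'}\Phi(2^a)}\Big(\sum_{\substack{Q\in\mathcal S_a\\ Q\subset R}} v(Q)\Big)^{1/p'},
\]
the point is that for $Q\in\mathcal S_a$ the very definition of $A_\infty^{\textup{exp}}$ gives $v(Q)\le 2^{a+1}\exp\big(\dashint_Q\log v\big)|Q|$. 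Substituting this absorbs the factor $2^{-a/p'}$ entirely (not just partially), and then a single application of H\"older in the index $a$ yields
\[
\Big(\sum_{a\ge 0}\frac{1}{\Phi(2^a)^p}\Big)^{1/p}\Big(\sum_{\substack{Q\in\mathcal S\\ Q\subset R}}\exp\Big(\dashint_Q\log v\Big)|Q|\Big)^{1/p'}.
\]
The second factor is $\lesssim v(R)^{1/p'}$ because the geometric maximal function $M_0h:=\sup_Q\exp(\dashint_Q\log|h|)\mathbf 1_Q$ is bounded on $L^1$. No reverse H\"older and no additional stopping times are needed; the whole improvement from $\int\phi^{-1}\,dt/t$ to $\int\Phi^{-p}\,dt/t$ comes from this identity and H\"older.
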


The organization of the paper will be as follows. In Section \ref{sec:2}, we present the proof of Theorem \ref{thm:characterization}. In section \ref{sec:3}, we prove Theorem \ref{thm:sharpconstant}. We study the one supremum estimate in Section \ref{sec:4}, which contains the proof of Theorems \ref{thm:onesupremum} and \ref{thm:exp}. And we end this manuscript with a generalized question in  Section \ref{sec:5}.

\section{A characterization of the two weight norm inequalities for bilinear forms}\label{sec:2}
In this section, we give a proof for Theorem~\ref{thm:characterization}.
Recall that  $u= w^{\frac {p_0}{p_0-p}}$ and $v=\sigma^{\frac{q_0'}{q_0'-q'}}$. We can write the two weight inequality as follows
\[
B(f, g)= \sum_{Q\in\mathcal S}\langle |f|^{p_0}\rangle_Q^{\frac 1{p_0}} \langle|g|^{q_0'}\rangle_Q^{\frac 1{q_0'}}\lambda_Q\le C \| |f|^{p_0} u^{-1}\|_{L^{p/{p_0}}(u)}^{1/{p_0}}\| |g|^{q_0'} v^{-1}\|_{L^{q'/{q_0'}}(v)}^{1/{q_0'}},
\]
which is equivalent to
\begin{equation}\label{eq:before}
\sum_{Q\in\mathcal S}(\langle |f|^{p_0}\rangle_Q^u)^{\frac 1{p_0}}  (\langle|g|^{q_0'}\rangle_Q^v)^{\frac 1{q_0'}}\lambda_Q  \langle u\rangle_Q^{\frac 1{p_0}} \langle v\rangle_Q^{\frac 1{q_0'}}  \le C \| |f|^{p_0}  \|_{L^{p/{p_0}}(u)}^{1/{p_0}}\| |g|^{q_0'} \|_{L^{q'/{q_0'}}(v)}^{1/{q_0'}},
\end{equation}
where for any function $h$ and weight $w$,
\[
\langle h\rangle_Q^w:= \frac 1{w(Q)}\int_Q h \dw.
\]
Follow the spirit in \cite{HL}, we claim that \eqref{eq:before} is equivalent to
\begin{equation}\label{eq:after}
\sum_{Q\in \mathcal S} \langle f\rangle_Q^u \langle g\rangle_Q^v \lambda_Q  \langle u\rangle_Q^{\frac 1{p_0}} \langle v\rangle_Q^{\frac 1{q_0'}}  \le C \| f \|_{L^{p }(u)} \| g \|_{L^{q' }(v)} .
\end{equation}
In fact, if \eqref{eq:before} holds, then \eqref{eq:after} follows immediately from H\"older's inequality. On the other hand, if \eqref{eq:after} holds, then we have
\begin{eqnarray*}
&&\sum_{Q\in\mathcal S}(\langle |f|^{p_0}\rangle_Q^u)^{\frac 1{p_0}}  (\langle|g|^{q_0'}\rangle_Q^v)^{\frac 1{q_0'}}\lambda_Q  \langle u\rangle_Q^{\frac 1{p_0}} \langle v\rangle_Q^{\frac 1{q_0'}}\\
&\le& \sum_{Q\in\mathcal S} \langle M_{p_0, u}^{\mathcal S}(f)\rangle_Q^u   \langle M_{q_0', v}^{\mathcal S}(g)\rangle_Q^v\lambda_Q  \langle u\rangle_Q^{\frac 1{p_0}} \langle v\rangle_Q^{\frac 1{q_0'}}\\
&\le& C \| M_{p_0, u}^{\mathcal S}(f)   \|_{L^{p }(u)} \| M_{q_0', v}^{\mathcal S}(g)  \|_{L^{q' }(v)} \\
&\le& C_{p_0,p,q_0,n} C  \| f \|_{L^{p }(u)} \| g \|_{L^{q' }(v)},
\end{eqnarray*}
where
\[
M_{p,u}^{\mathcal S}(f):=\sup_{Q\in\mathcal S} (\langle |f|^{p}\rangle_Q^u)^{\frac 1{p}}.
\]
Then by duality, it suffices to give a characterization for the following two weight norm inequality
\begin{equation}\label{eq:final}
\| T_\tau(fu)\|_{L^{q}(v)}\le C\|f\|_{L^p(u)},
\end{equation}
where
\[
 T_\tau(f )= \sum_{Q\in\mathcal S} \tau_Q \langle f \rangle_Q \mathbf 1_Q
\] and recall that
\[
\tau_Q= \langle u\rangle_Q^{-\frac 1{p_0'}} \langle v\rangle_Q^{-\frac 1{q_0}} \frac{\lambda_Q}{|Q|}.
\]
The case of $p\le q$ was due to  Lacey, Sawyer and Uriarte-Tuero \cite{LSU}, and the case of $p>q$ was given by Tanaka \cite{tanaka2014}. Here we follow the unified testing for both cases given by H\"anninen, Hyt\"onen and the author \cite{HHL}.
\begin{Proposition}\cite[Theorem 1.5]{HHL}\label{prop:char}
Let $p,q\in(1,\infty)$ and $w,\sigma$ be two measures. Let
\begin{equation}\label{eq:TandTQ}
\begin{split}
  T(f\sigma) &:=\sum_{Q\in\mathcal{D}}\lambda_Q\int_Q f  d\sigma\cdot 1_Q,\qquad\lambda_Q\geq 0,\\
  T_{Q}(f\sigma) &:=\sum_{\substack{Q'\in\mathcal{D}\\ Q'\subseteq Q}}\lambda_{Q'}\int_{Q'} f  d\sigma\cdot 1_{Q'}.
\end{split}
\end{equation}
We have
\begin{equation*}
  \|T  (\cdot \sigma)\|_{L^p(\sigma)\rightarrow L^q(w)}
  \eqsim \mathfrak{T}_s+\mathfrak{T}_s^*,
\end{equation*}
where $s\in(1,\infty]$ is determined by
\begin{equation}\label{eq:rDef}
  \frac{1}{s}=\Big(\frac{1}{q}-\frac{1}{p}\Big)_+
  :=\max\Big\{\frac{1}{q}-\frac{1}{p},0\Big\}.
\end{equation}
and
\begin{equation}\label{eq:newTesting}
\begin{split}
  \mathfrak{T}_s & :=\sup_{\mathcal F}\Big \|\Big\{\frac{\|T_{F}(\sigma)\|_{L^{q}(w)}}{\sigma(F)^{1/p}}\Big\}_{F\in\mathcal F} \Big\|_{\ell^s}, \\
  \mathfrak{T}_s^* & :=\sup_{\mathcal{G}}\Big \|\Big\{\frac{\|T_{G}(w)\|_{L^{p'}(\sigma)}}{w(G)^{1/{q'}}}\Big\}_{G\in\mathcal G} \Big\|_{\ell^s},
\end{split}
\end{equation}
where the supremums are taken over all subcollections $\mathcal F$ and $\mathcal{G}$ of $\mathcal{S}$ that are sparse with respect to $\sigma$ and $\omega$, respectively.
\end{Proposition}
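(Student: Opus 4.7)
The plan is to prove Proposition~\ref{prop:char} in two stages: first, the necessity of $\mathfrak{T}_s$ and $\mathfrak{T}_s^*$; second, the sufficiency of their sum via a parallel (two-sided) principal-cube decomposition.

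\emph{Necessity.} To obtain $\mathfrak{T}_s \lesssim \|T(\cdot\sigma)\|_{L^p(\sigma)\to L^q(w)}$, I fix a $\sigma$-sparse subfamily $\mathcal{F}\subseteq \mathcal{D}$ together with disjoint principal sets $E_F \subseteq F$ satisfying $\sigma(E_F) \geq \tfrac12 \sigma(F)$. To realise the $\ell^s$-norm dually, pair with $(b_F) \geq 0$, $\|b\|_{\ell^{s'}} \leq 1$, and for each $F$ pick a normalised extremiser $h_F \in L^{q'}(w)$ supported in $E_F$ that nearly attains $\|T_F(\sigma)\|_{L^q(w)}$. Test the operator on $f = \sum_F b_F \sigma(F)^{-1/p} 1_{E_F}$ against $g$ built from the $h_F$'s. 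The disjointness of the $E_F$ keeps $\|f\|_{L^p(\sigma)}, \|g\|_{L^{q'}(w)} \lesssim 1$, and the ensuing lower bound on $\int T(f\sigma) g\,dw$ recovers $\sum_F b_F \|T_F(\sigma)\|_{L^q(w)}/\sigma(F)^{1/p}$ up to summable errors absorbed by the sparseness. Taking sup over $b$ and $\mathcal{F}$ gives $\mathfrak{T}_s \lesssim \|T(\cdot\sigma)\|$. The dual bound $\mathfrak{T}_s^* \lesssim \|T(\cdot\sigma)\|$ follows from the same recipe applied to the formal adjoint $T^*(gw) = \sum_Q \lambda_Q \int_Q g\,dw \cdot 1_Q$, which has the same sparse structure with the roles of $\sigma$ and $w$ exchanged.

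\emph{Sufficiency.} By duality it suffices to prove
\[
\Lambda(f,g) := \sum_Q \lambda_Q \int_Q f\,d\sigma \int_Q g\,dw \lesssim (\mathfrak{T}_s + \mathfrak{T}_s^*) \|f\|_{L^p(\sigma)}\|g\|_{L^{q'}(w)}.
\]
Construct principal cubes $\mathcal{F}$ for $f$ with respect to $\sigma$ (children of $F$ are maximal $F' \subsetneq F$ with $\langle f\rangle_{F'}^\sigma > 2\langle f\rangle_F^\sigma$) and symmetrically $\mathcal{G}$ for $g$ with respect to $w$; both are sparse by Carleson packing. Write $\pi_\mathcal{F}(Q), \pi_\mathcal{G}(Q)$ for the minimal ancestors, and split $\Lambda = \Lambda_1 + \Lambda_2$ by $\pi_\mathcal{F}(Q) \subseteq \pi_\mathcal{G}(Q)$ or the opposite; by symmetry it is enough to bound $\Lambda_1$ by $\mathfrak{T}_s$ and $\Lambda_2$ by $\mathfrak{T}_s^*$. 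On $\Lambda_1$, apply the stopping estimates $\int_Q f\,d\sigma \leq 2\langle f\rangle_F^\sigma \sigma(Q)$ and $\int_Q g\,dw \leq 2\langle g\rangle_G^w w(Q)$ (with $F = \pi_\mathcal{F}(Q)$, $G = \pi_\mathcal{G}(Q)$), gather the inner sum over $\{Q : \pi_\mathcal{F}(Q) = F\}$ into (a chunk of) $T_F(\sigma)$, and apply H\"older on each $F$-slice to obtain $\|T_F(\sigma)\|_{L^q(w)} \|\tilde g_F\|_{L^{q'}(w)}$ with a stopping-average restriction $\tilde g_F$ of $g$ built from $\mathcal{G}$. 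An outer H\"older pairing over $F \in \mathcal{F}$, with exponents calibrated by $1/s = (1/q - 1/p)_+$, couples the testing constant $\mathfrak{T}_s$ with the Carleson-embedding bounds $\sum_F (\langle f\rangle_F^\sigma)^p \sigma(F) \lesssim \|f\|_{L^p(\sigma)}^p$ and its $g$-analogue, closing the estimate.

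\emph{Main obstacle.} The delicate case is $p > q$ (so $s < \infty$), where the pointwise Sawyer-type bound $\mathfrak{T}_\infty$ does not suffice and one must invoke the sparse $\ell^s$-summability encoded in $\mathfrak{T}_s$ at exactly the right step of the outer H\"older chain; the identity $1/s = 1/q - 1/p$ is precisely what balances the remaining exponents. A second, more administrative issue is ensuring that the subcollections of $\mathcal{F}$ and $\mathcal{G}$ appearing mid-argument are themselves $\sigma$- (resp.~$w$-) sparse, so that the testing hypotheses are legitimately applicable; this bookkeeping, combined with the interlocking $F$/$G$-slices that the split $\Lambda_1 + \Lambda_2$ creates, is where the argument must be executed with care.
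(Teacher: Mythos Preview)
The paper does not contain a proof of this proposition at all: it is quoted verbatim as \cite[Theorem~1.5]{HHL} and then immediately applied to deduce Theorem~\ref{thm:characterization}. So there is no ``paper's own proof'' to compare your attempt against.

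That said, a brief comment on your sketch. Your sufficiency outline via parallel principal cubes is the standard route and is essentially the argument in \cite{HHL}; the identification of the exponent balance $1/s = 1/q - 1/p$ as the crux in the outer H\"older step is exactly right. Your necessity argument, however, is under-specified in the case $p>q$ (i.e.\ $s<\infty$). Testing on $f=\sum_F b_F\,\sigma(F)^{-1/p}\mathbf 1_{E_F}$ and a companion $g$ does control $\|f\|_{L^p(\sigma)}$ via the disjointness of the $E_F$, but the claimed lower bound ``$\int T(f\sigma)\,g\,dw \gtrsim \sum_F b_F\,\|T_F(\sigma)\|_{L^q(w)}/\sigma(F)^{1/p}$ up to summable errors'' is where the real work lies: the localisations $T_F$ overlap across the nested $F\in\mathcal F$, and an extremiser $h_F$ for $\|T_F(\sigma)\|_{L^q(w)}$ is supported on $F$, not on $E_F$, so the $h_F$'s are not disjoint either. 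In \cite{HHL} the necessity for $s<\infty$ is obtained by a different mechanism (a discrete Wolff-potential/recursive inequality rather than a single test function), and your sketch would need to be substantially fleshed out---or replaced by that device---to close the gap.
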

Now combining \eqref{eq:final} and Proposition \ref{prop:char}, Theorem \ref{thm:characterization} follows immediately.

\section{Mixed $A_p$-$A_\infty$ type estimate for bilinear forms}\label{sec:3}
In this section, we focus on the sharp constant for the case $p=q$. In this case, the testing condition degenerate to the Sawyer type testing condition. Namely,
\begin{equation}\label{eq:sawyer}
\mathcal N\eqsim  \sup_{R\in\mathcal S}  \frac{ \| T_R (v)    \|_{L^{p'}(u)}}{v(R)^{  1/{p'} }}
 +\sup_{R\in \mathcal S}  \frac{\| T_R(u)\|_{L^p(v)}}{u(R)^{ 1/p}}.
\end{equation}
Before further estimates, we introduce the following proposition.
\begin{Proposition}\label{kolmogorov}
Let $\mathcal S$ be a sparse family and $0\le \gamma, \eta<1$ satisfying $\gamma+\eta<1$. Then
\[
\sum_{\substack{Q\in \mathcal S\\Q\subset R}}\langle u \rangle_Q^\gamma \langle v\rangle_Q^\eta |Q| \lesssim \langle u \rangle_R^\gamma \langle v\rangle_R^\eta |R|.
\]
\end{Proposition}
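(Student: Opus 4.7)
The plan is to convert the sum into an integral via the sparseness structure, and then apply H\"older's inequality together with Kolmogorov's inequality for the dyadic maximal function. The hypothesis $\gamma+\eta<1$ will be used precisely to guarantee that both H\"older exponents can be pushed strictly below~$1$, which is the range where Kolmogorov's estimate is available.

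The first step would use the fact that sparseness gives, for each $Q\in\mathcal S$, a measurable set $E_Q\subset Q$ with $|E_Q|\geq|Q|/2$ such that the family $\{E_Q\}_{Q\in\mathcal S}$ is pairwise disjoint. Writing $M^{\mathcal S}_R$ for the maximal function taken over cubes in $\mathcal S$ contained in $R$, the pointwise bound $\langle h\rangle_Q\leq M^{\mathcal S}_R(h)(x)$, valid for any $x\in Q\subset R$, would yield
\[
\sum_{\substack{Q\in\mathcal S\\ Q\subset R}}\langle u\rangle_Q^\gamma\langle v\rangle_Q^\eta|Q|\leq 2\sum_{\substack{Q\in\mathcal S\\ Q\subset R}}\int_{E_Q} M^{\mathcal S}_R(u)^\gamma M^{\mathcal S}_R(v)^\eta\,dx\leq 2\int_R M^{\mathcal S}_R(u)^\gamma M^{\mathcal S}_R(v)^\eta\,dx.
\]

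Next I would separate the two maximal functions via H\"older. Assuming $\gamma,\eta>0$ (the cases when one of them vanishes being easier and handled by a direct application of Kolmogorov), I would pick any $\theta$ with $\gamma/(1-\eta)<\theta<1$; such $\theta$ exists \emph{exactly} because $\gamma+\eta<1$. Setting $a=\theta/\gamma$ and $b=\theta/(\theta-\gamma)$, one checks $1/a+1/b=1$, $\gamma a=\theta<1$, and $\eta b=\eta\theta/(\theta-\gamma)<1$. Thus H\"older gives
\[
\int_R M^{\mathcal S}_R(u)^\gamma M^{\mathcal S}_R(v)^\eta\,dx\leq\Bigl(\int_R M^{\mathcal S}_R(u)^{\gamma a}\,dx\Bigr)^{1/a}\Bigl(\int_R M^{\mathcal S}_R(v)^{\eta b}\,dx\Bigr)^{1/b}.
\]

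To finish, I would invoke Kolmogorov's inequality, which follows from the weak-$(1,1)$ boundedness of $M^{\mathcal S}_R$ via distribution-function integration: for any $\alpha\in(0,1)$ and nonnegative $h$,
\[
\int_R M^{\mathcal S}_R(h)^\alpha\,dx\lesssim|R|^{1-\alpha}\Bigl(\int_R h\,dx\Bigr)^\alpha=|R|\langle h\rangle_R^\alpha.
\]
Applying this with $\alpha=\gamma a$ to the first factor and $\alpha=\eta b$ to the second, and collecting the powers of $|R|$, which sum to $1/a+1/b=1$, would produce the desired estimate $|R|\langle u\rangle_R^\gamma\langle v\rangle_R^\eta$. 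The only genuinely delicate point in the argument is the selection of $\theta$, and this is the unique place where the hypothesis $\gamma+\eta<1$ enters in an essential way; without it, no admissible exponent pair exists and the proof breaks down.
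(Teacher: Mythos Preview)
Your proof is correct and follows essentially the same route as the paper: pass to an integral via the disjoint sets $E_Q$, apply H\"older's inequality, and close with Kolmogorov's inequality using that both exponents are strictly below~$1$. The only cosmetic difference is that the paper fixes a specific H\"older pair (namely $1/s=\gamma+(1-\gamma-\eta)/2$), whereas you describe the full admissible range of $\theta$; the paper's choice lies inside your interval.
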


\begin{proof}
Indeed, set $1/r:=\gamma+\eta$, $1/s:=\gamma+(1-1/r)/2$ and $1/{s'}:=1-1/{s}$. Denote
\[
E_Q:=Q\setminus \bigcup_{Q'\in\mathcal S, Q'\subsetneq Q} Q'.
\] By sparseness and Kolmogorov's inequality, we have
\begin{align*}
\sum_{\substack{Q\in \mathcal S\\Q\subset R}}\langle u \rangle_Q^\gamma \langle v\rangle_Q^\eta |Q|&\le 2 \sum_{\substack{Q\in \mathcal S\\Q\subset R}}\langle u \rangle_Q^\gamma \langle v\rangle_Q^\eta |E_Q|\\
&\le 2 \int_{R} M(u\mathbf 1_R)^{\gamma} M(v\mathbf 1_R)^{\eta}dx\\
&\le 2 \Big(\int_R M(u\mathbf 1_R)^{s\gamma}  \Big)^{1/s}\Big(\int_R M(v\mathbf 1_R)^{s'\eta}  \Big)^{1/{s'}}\\
&\lesssim \langle u\rangle_R^\gamma |R|^{1/s} \langle v\rangle_R^{\eta}|R|^{1/{s'}}\\
&=\langle u \rangle_R^\gamma \langle v\rangle_R^\eta |R|.
\end{align*}
\end{proof}
We also need the following result
\begin{Proposition}[\cite{cov2004}, Proposition~2.2]\label{dyadicsum}
Let $1<s<\infty$, $\sigma$ be a positive Borel measure and
\[
\phi=\sum_{Q\in\mathcal D} \alpha_Q \mathbf 1_Q,\qquad \phi_Q=\sum_{Q'\subset Q}\alpha_{Q'} \mathbf 1_{Q'}.
\]
Then
\[
\|\phi\|_{L^s(\sigma)}\eqsim \Big( \sum_{Q\in \mathcal D} \alpha_Q (\langle\phi_Q\rangle_Q^\sigma)^{s-1}\sigma(Q) \Big)^{1/s}.
\]
\end{Proposition}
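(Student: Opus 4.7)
The plan is to prove the two-sided equivalence by establishing each inequality separately. The bound RHS $\lesssim \|\phi\|_{L^s(\sigma)}^s$ is the easy half: since $\phi_Q\le\phi$ on $Q$, the trivial pointwise estimate $\langle\phi_Q\rangle_Q^\sigma\le M_\sigma^{\mathcal D}\phi(x)$ holds for $\sigma$-a.e.\ $x\in Q$, where $M_\sigma^{\mathcal D}$ is the dyadic $\sigma$-maximal operator. Raising to the $(s-1)$-th power, multiplying by $\alpha_Q\mathbf 1_Q$, integrating against $\sigma$, and summing in $Q$ yields
\[
\sum_Q \alpha_Q(\langle\phi_Q\rangle_Q^\sigma)^{s-1}\sigma(Q) \le \int \phi\cdot(M_\sigma^{\mathcal D}\phi)^{s-1}\,d\sigma \le \|\phi\|_{L^s(\sigma)}\,\|M_\sigma^{\mathcal D}\phi\|_{L^s(\sigma)}^{s-1} \lesssim_s \|\phi\|_{L^s(\sigma)}^s
\]
by H\"older's inequality with conjugate exponents $(s,s')$ and the $L^s(\sigma)$-boundedness of $M_\sigma^{\mathcal D}$.

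For the reverse bound $\|\phi\|_{L^s(\sigma)}^s\lesssim$ RHS I begin with the dual expansion
\[
\|\phi\|_{L^s(\sigma)}^s = \int \phi\cdot\phi^{s-1}\,d\sigma = \sum_Q \alpha_Q \int_Q \phi^{s-1}\,d\sigma
\]
and decompose $\phi|_Q = \phi_Q|_Q + \psi_Q$ with $\psi_Q := \sum_{R \supsetneq Q}\alpha_R$ constant on $Q$. The elementary inequality $(a+b)^{s-1}\lesssim_s a^{s-1} + b^{s-1}$ splits the sum into a local piece $\mathrm{I}:=\sum_Q \alpha_Q\int_Q \phi_Q^{s-1}\,d\sigma$ and an ancestor piece $\mathrm{II}:=\sum_Q \alpha_Q\psi_Q^{s-1}\sigma(Q)$. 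For $1<s\le 2$, Jensen's inequality gives $\int_Q \phi_Q^{s-1}\,d\sigma \le \sigma(Q)(\langle\phi_Q\rangle_Q^\sigma)^{s-1}$, so $\mathrm{I}$ is termwise bounded by the RHS; for $s>2$ I would iterate the same decomposition inside each $Q$, using that $\phi_Q$ has the same sum-of-indicators structure supported in $Q$.

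The main obstacle is term $\mathrm{II}$, whose summands involve ancestor averages that do not appear on the RHS. To control it I would run a principal-cubes stopping time on the averages $\{\langle\phi\rangle_Q^\sigma\}$ with a large doubling parameter $\Lambda$: the resulting family $\mathcal P$ of principal cubes is $\sigma$-sparse, and $\psi_Q \le \langle\phi\rangle_Q^\sigma \le \Lambda \langle\phi\rangle_{\pi Q}^\sigma$ whenever $\pi Q \in \mathcal P$ is the principal parent of $Q$. Grouping $\mathrm{II}$ by principal parent and using the Carleson-type identity $\sum_{Q\subseteq P}\alpha_Q\sigma(Q) = \sigma(P)\langle\phi_P\rangle_P^\sigma$ collapses $\mathrm{II}$ to $\lesssim \sum_{P\in\mathcal P}\sigma(P)(\langle\phi\rangle_P^\sigma)^s$, which the Carleson embedding theorem applied to the $\sigma$-sparse family $\mathcal P$ bounds by $\|\phi\|_{L^s(\sigma)}^s$. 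The delicate final step---the technical heart, handled in \cite{cov2004}---is to tune $\Lambda$ together with a careful accounting of the intermediate cubes between $Q$ and $\pi Q$ so that the resulting constant on the $\|\phi\|_{L^s(\sigma)}^s$ term is small enough to be absorbed into the left-hand side, leaving the clean bound by the RHS alone.
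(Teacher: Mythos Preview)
The paper does not supply its own proof of this proposition; it is quoted verbatim from \cite{cov2004}, Proposition~2.2, so there is nothing in the present paper to compare your argument against.

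On the substance: your easy direction is clean and correct. The hard direction, however, is not complete as written, and the sketch has a structural problem. You start from the identity $\int\phi^{s}\,d\sigma=\sum_{Q}\alpha_{Q}\int_{Q}\phi^{s-1}\,d\sigma$ and then split $\phi=\phi_{Q}+\psi_{Q}$, which manufactures the ancestor term $\mathrm{II}$ that you then try to absorb. But the absorption does not work in the form you describe: with stopping parameter $\Lambda$ one gets $\psi_{Q}\le\langle\phi\rangle_{Q}^{\sigma}\le\Lambda\langle\phi\rangle_{\pi Q}^{\sigma}$, so $\mathrm{II}\le\Lambda^{s-1}\sum_{P\in\mathcal P}\sigma(P)(\langle\phi\rangle_{P}^{\sigma})^{s}$, and after Carleson embedding the constant in front of $\|\phi\|_{L^{s}(\sigma)}^{s}$ grows with $\Lambda$, not shrinks. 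So ``tuning $\Lambda$'' cannot produce a small absorbable constant; you would need a genuinely different mechanism, and you yourself concede this by deferring the ``technical heart'' back to \cite{cov2004}.

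A cleaner entry point that avoids $\mathrm{II}$ altogether is the elementary chain inequality: for $a_{k}\ge 0$ and $s\ge 1$,
\[
\Big(\sum_{k}a_{k}\Big)^{s}\le s\sum_{k}a_{k}\Big(\sum_{j\ge k}a_{j}\Big)^{s-1}.
\]
Applied along the dyadic chain through each $x$ (ordered from the outside in) this gives the pointwise bound $\phi(x)^{s}\le s\sum_{Q\ni x}\alpha_{Q}\,\phi_{Q}(x)^{s-1}$, hence $\|\phi\|_{L^{s}(\sigma)}^{s}\le s\,\mathrm{I}$ directly, with no ancestor piece. For $1<s\le 2$ Jensen then finishes as you say. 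For $s>2$, however, ``iterate the same decomposition'' is still too vague: a naive recursion on $\phi_{Q}^{\,s-1}$ reintroduces sums over $Q'\supseteq Q$ after swapping the order of summation, so one is back to an ancestor term. The actual argument in \cite{cov2004} handles the full range $1<s<\infty$ by a more careful bookkeeping; your sketch does not yet capture that step.
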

Now we are ready to prove Theorem \ref{thm:sharpconstant}.
\begin{proof}[Proof of Theorem \ref{thm:sharpconstant}]
As that in \cite{BFP}, we denote by $\rho$ the critical index $1+ p_0/{q_0'}$. First, we consider the case $p\ge \rho$. set
\[
\alpha= \frac{1}{p_0(r-1)}\min\{p-p_0, 1\}.
\]
We can check that
\[
\frac 1{p_0}- (r-1)\alpha\ge 0,\,\, \frac 1{q_0'}-\alpha\ge 0,\,\,\mbox{and}\,\, \frac 1{p_0}- (r-1)\alpha+\frac 1{q_0'}-\alpha<1.
\]
By Propositions \ref{dyadicsum} and  \ref{kolmogorov}, we have
\begin{eqnarray*}
 &&\| T_R(v)  \|_{L^{p'}(u)}\\
&=&\Big \|\sum_{\substack{Q\in\mathcal S\\ Q\subset R}}  \langle u\rangle_Q^{-\frac 1{p_0'}} \langle v\rangle_Q^{\frac 1{q_0'}} \mathbf 1_Q   \Big\|_{L^{p'}(u)} \\
&\eqsim &\Big(    \sum_{\substack{Q\in\mathcal S\\ Q\subset R}}  \langle u\rangle_Q^{-\frac 1{p_0'}} \langle v\rangle_Q^{\frac 1{q_0'}} u(Q)  \Big( \frac 1{u(Q)}\sum_{Q'\subset Q} \langle u\rangle_{Q'}^{-\frac 1{p_0'}} \langle v\rangle_{Q'}^{\frac 1{q_0'}} u(Q')\Big)^{p'-1}      \Big)^{\frac 1{p'}}\\
&\le& [v, u]_{A_r}^{\frac{\alpha}{p} }\Big(    \sum_{\substack{Q\in\mathcal S\\ Q\subset R}}  \langle u\rangle_Q^{-\frac 1{p_0'}} \langle v\rangle_Q^{\frac 1{q_0'}} u(Q)  \Big( \frac 1{u(Q)}\sum_{Q'\subset Q} \langle u\rangle_{Q'}^{\frac 1{p_0}-(r-1)\alpha} \langle v\rangle_{Q'}^{\frac 1{q_0'}-\alpha } |Q'|\Big)^{p'-1}      \Big)^{\frac 1{p'}}\\
&\lesssim& [v, u]_{A_r}^{\frac{\alpha}{p } }\Big(    \sum_{\substack{Q\in\mathcal S\\ Q\subset R}}  \langle u\rangle_Q^{-\frac 1{p_0'}} \langle v\rangle_Q^{\frac 1{q_0'}} u(Q)  \Big( \frac 1{u(Q)} \langle u\rangle_Q^{\frac 1{p_0}-(r-1)\alpha} \langle v\rangle_{Q }^{\frac 1{q_0'}-\alpha }  |Q |\Big)^{p'-1}      \Big)^{\frac 1{p'}}\\
&=& [v, u]_{A_r}^{\frac{\alpha}{p } }\Big(    \sum_{\substack{Q\in\mathcal S\\ Q\subset R}}  \langle u\rangle_Q^{\frac 1{p_0}-(p'-1)((r-1)\alpha+\frac 1{p_0'})} \langle v\rangle_Q^{\frac 1{q_0'}+(\frac 1{q_0'}-\alpha)(p'-1)} |Q|  \Big)^{\frac 1{p'}}\\
&\le& [v, u]_{A_r}^{\frac{\alpha}{p }+\frac 1{p'p_0(r-1)}- (\frac\alpha p+\frac 1{pp_0'(r-1)}) }\Big(\sum_{\substack{Q\in\mathcal S\\ Q\subset R}}v(Q)\Big)^{\frac 1{p'}} \\
&\lesssim& [v, u]_{A_r}^{\frac 1{q_0'}-\frac 1{p'}}[v]_{A_\infty}^{\frac 1{p'}} v(R)^{\frac 1{p'}},
\end{eqnarray*}
For the case $p<\rho$,  set
\[
\tilde \alpha =(\frac 1{q_0'}-\frac 1{p'})p.
\]
Again, we can check that
\[
\frac 1{q_0'}-\tilde \alpha=\frac {1}{q_0'}(p'-1)(p-1)-\frac {1}{q_0'}(p'-q_0')(p-1)\ge0,
\]
\[
\frac 1{p_0}-\tilde\alpha(r-1)\ge 0,
\]
and
\[
\frac 1{p_0}-\tilde\alpha(r-1)+ \frac 1{q_0'}-\tilde \alpha= 1-(p-1)(\frac 1{p_0}-\frac 1{q_0})<1.
\]
By Propositions \ref{dyadicsum} and  \ref{kolmogorov} again, we have
\begin{eqnarray*}
 &&\| T_R(v)  \|_{L^{p'}(u)}\\
&\eqsim& \Big(    \sum_{\substack{Q\in\mathcal S\\ Q\subset R}}  \langle u\rangle_Q^{-\frac 1{p_0'}} \langle v\rangle_Q^{\frac 1{q_0'}} u(Q)  \Big( \frac 1{u(Q)}\sum_{Q'\subset Q} \langle u\rangle_{Q'}^{\frac 1{p_0}} \langle v\rangle_{Q'}^{\frac 1{q_0'}} |Q'|\Big)^{p'-1}      \Big)^{\frac 1{p'}}\\
&\le& [v,u]_{A_r}^{\frac {\tilde \alpha}p}\Big(    \sum_{\substack{Q\in\mathcal S\\ Q\subset R}}  \langle u\rangle_Q^{-\frac 1{p_0'}} \langle v\rangle_Q^{\frac 1{q_0'}} u(Q)  \Big( \frac 1{u(Q)}\sum_{Q'\subset Q} \langle u\rangle_{Q'}^{\frac 1{p_0}-\tilde\alpha(r-1)} \langle v\rangle_{Q'}^{\frac 1{q_0'}-\tilde \alpha } |Q'|\Big)^{p'-1}      \Big)^{\frac 1{p'}}\\
&\lesssim& [v,u]_{A_r}^{\frac {\tilde \alpha}p}\Big(    \sum_{\substack{Q\in\mathcal S\\ Q\subset R}}  \langle u\rangle_Q^{-\frac 1{p_0'}} \langle v\rangle_Q^{\frac 1{q_0'}} u(Q)  \Big( \frac 1{u(Q)}  \langle u\rangle_{Q }^{\frac 1{p_0}-\tilde\alpha(r-1)} \langle v\rangle_{Q }^{\frac 1{q_0'}-\tilde \alpha } |Q |\Big)^{p'-1}      \Big)^{\frac 1{p'}}\\
&= &[v,u]_{A_r}^{\frac {\tilde \alpha}p}\Big(    \sum_{\substack{Q\in\mathcal S\\ Q\subset R}}  \langle u\rangle_Q^{\frac 1{p_0}- (p'-1)(\frac 1{p_0'}+\tilde \alpha(r-1))} \langle v\rangle_Q^{\frac 1{q_0'}+(p'-1)(\frac 1{q_0'}-\tilde\alpha)} |Q|      \Big)^{\frac 1{p'}}\\
&=&[v,u]_{A_r}^{\frac 1{q_0'}-\frac 1{p'}} (\sum_{\substack{Q\in\mathcal S\\ Q\subset R}}v(Q))^{\frac 1{p'}}\\
&\lesssim& [v, u]_{A_r}^{\frac 1{q_0'}-\frac 1{p'}}[v]_{A_\infty}^{\frac 1{p'}} v(R)^{\frac 1{p'}}.
\end{eqnarray*}
By symmetry, we have
\begin{align*}
\| T_R(u)  \|_{L^{p }(v)}
&\lesssim [u, v]_{A_{r'}}^{\frac 1{p_0}-\frac 1{p}}[u]_{A_\infty}^{\frac 1{p}} u(R)^{\frac 1{p}}\\
&= [v, u]_{A_r}^{\frac 1{q_0'}-\frac 1{p'}}[u]_{A_\infty}^{\frac 1{p}} u(R)^{\frac 1{p}}.
\end{align*}
Then the desired estimate follows from \eqref{eq:sawyer} immediately.
\end{proof}

\section{Mixed $A_p$-$A_\infty$ type estimates with one supremum}\label{sec:4}
In this section, we study the one supremum estimate. And this could be done by just slightly modify the arguments in the previous section. We first prove Theorem~\ref{thm:onesupremum}.
\begin{proof}[Proof of Theorem~\ref{thm:onesupremum}]
 By symmetry, we only need to estimate
$\| T_R(v)  \|_{L^{p'}(u)}$. Denote
\[
\mathcal S_a=\{Q\in \mathcal S: 2^a\le A_\infty(v,Q)< 2^{a+1}\}.
\]
We will abuse of using the notations $T$ and $ T_R(v)$ slightly, which is now understood as summation over $\mathcal S_a$ instead of $\mathcal S$.
We still consider the case $p\ge \rho$ first, we have
\begin{eqnarray*}
 &&\| T_R(v)  \|_{L^{p'}(u)}\\
&\eqsim& \Big(    \sum_{\substack{Q\in\mathcal S_a\\ Q\subset R}}  \langle u\rangle_Q^{-\frac 1{p_0'}} \langle v\rangle_Q^{\frac 1{q_0'}} u(Q)  \Big( \frac 1{u(Q)}\sum_{Q'\subset Q} \langle u\rangle_{Q'}^{\frac 1{p_0}} \langle v\rangle_{Q'}^{\frac 1{q_0'}} |Q'|\Big)^{p'-1}      \Big)^{\frac 1{p'}}\\
&\le& \bigg(\frac{[v,u]_{(A_r) (\phi(A_\infty)) }}{2^{\frac a{p'}}\phi(2^a)}\bigg)^{\frac{\alpha   q_0}{q_0-p}}\Big(    \sum_{\substack{Q\in\mathcal S_a\\ Q\subset R}}  \langle u\rangle_Q^{-\frac 1{p_0'}} \langle v\rangle_Q^{\frac 1{q_0'}} u(Q)  \Big( \frac 1{u(Q)}\\
&&\qquad\times\sum_{Q'\subset Q} \langle u\rangle_{Q'}^{\frac 1{p_0}-(r-1)\alpha} \langle v\rangle_{Q'}^{\frac 1{q_0'}-\alpha } |Q'|\Big)^{p'-1}      \Big)^{\frac 1{p'}}\\
&\lesssim& \bigg(\frac{[v,u]_{(A_r) (\phi(A_\infty)) }}{2^{\frac a{p'}}\phi(2^a)}\bigg)^{\frac{\alpha   q_0}{q_0-p}}\Big(    \sum_{\substack{Q\in\mathcal S_a\\ Q\subset R}}  \langle u\rangle_Q^{-\frac 1{p_0'}} \langle v\rangle_Q^{\frac 1{q_0'}} u(Q) \\
 &&\qquad\times\Big( \frac 1{u(Q)} \langle u\rangle_Q^{\frac 1{p_0}-(r-1)\alpha} \langle v\rangle_{Q }^{\frac 1{q_0'}-\alpha }  |Q |\Big)^{p'-1}      \Big)^{\frac 1{p'}}\\
&=& \bigg(\frac{[v,u]_{(A_r) (\phi(A_\infty)) }}{2^{\frac a{p'}}\phi(2^a)}\bigg)^{\frac{\alpha   q_0}{q_0-p}}\Big(    \sum_{\substack{Q\in\mathcal S_a\\ Q\subset R}}  \langle u\rangle_Q^{\frac 1{p_0}-(p'-1)((r-1)\alpha+\frac 1{p_0'})} \langle v\rangle_Q^{\frac 1{q_0'}+(\frac 1{q_0'}-\alpha)(p'-1)} |Q|  \Big)^{\frac 1{p'}}\\
&\le &  \frac{[v,u]_{(A_r) (\phi(A_\infty)) }}{2^{\frac a{p'}}\phi(2^a)}    \Big(\sum_{\substack{Q\in\mathcal S_a\\ Q\subset R}}v(Q)\Big)^{\frac 1{p'}} \\
&\lesssim & \frac{1}{\phi(2^a) } [v,u]_{(A_r) (\phi(A_\infty)) } v(R)^{\frac 1{p'}}.
\end{eqnarray*}

For the case $p<\rho$, similar arguments show that
\[
\| T_R(v)  \|_{L^{p'}(u)}\lesssim \frac{1}{\phi(2^a) } [v,u]_{(A_r) (\phi(A_\infty)) } v(R)^{\frac 1{p'}}.
\]
It remains to sum over $a$, by definition, $a \ge 0$, we have
\begin{align*}
 \sup_{R\in\mathcal S}  \frac{ \| T_R (v)    \|_{L^{p'}(u)}}{v(R)^{  1/{p'} }}&\lesssim \sum_{a\ge 0 }\frac{1}{\phi(2^a) } [v,u]_{(A_r) (\phi(A_\infty)) }\\
 &\lesssim \sum_{a\ge 0}\int_{2^{a-1}}^{2^a}\frac {1}{\phi(t)  }\frac{dt}{t}[v,u]_{(A_r) (\phi(A_\infty)) }\\
 &= \int_{1/2}^{\infty} \frac {1}{\phi(t)  }\frac{dt}{t}[v,u]_{(A_r) (\phi(A_\infty)) }.
\end{align*}
This completes the proof.
\end{proof}

Next we prove Theorem \ref{thm:exp}.
\begin{proof}[Proof of Theorem \ref{thm:exp}]
We follow the same strategy as that in the proof of Theorem \ref{thm:onesupremum}. Set
\[
\mathcal S_a=\{Q\in \mathcal S: 2^a\le A_\infty^{\textup{exp}}(v,Q)< 2^{a+1}\}.
\]
 We have
 \begin{eqnarray*}
 \| T_R(v)  \|_{L^{p'}(u)}
&\lesssim& \sum_{a\ge 0}  \frac{[v,u]_{(A_r) (\Phi(A_\infty)) }}{2^{\frac a{p'}}\Phi(2^a)}\Big(\sum_{\substack{Q\in\mathcal S_a\\ Q\subset R}}v(Q)\Big)^{\frac 1{p'}}\\
&\le& [v,u]_{(A_r) (\Phi(A_\infty)) }\sum_{a\ge 0}  \frac{1}{\Phi(2^a)}\Big(\sum_{\substack{Q\in\mathcal S_a\\ Q\subset R}}\exp\Big( \dashint_Q \log v \Big) |Q|\Big)^{\frac 1{p'}}\\
&\le & [v,u]_{(A_r) (\Phi(A_\infty)) }\Big(\sum_{a\ge 0}  \frac{1}{\Phi(2^a)^p}\Big)^{\frac 1p} \Big(\sum_{\substack{Q\in\mathcal S \\ Q\subset R}}\exp\Big( \dashint_Q \log v \Big) |Q|\Big)^{\frac 1{p'}}\\
&\lesssim& [v,u]_{(A_r) (\Phi(A_\infty)) } v(R)^{\frac 1{p'}}\Big(\int_{1/2}^\infty \frac {1}{\Phi(t)^{ p}}\frac {dt}t\Big)^{\frac 1p}.
\end{eqnarray*}
This completes the proof.
\end{proof}

\section{Further discussions}\label{sec:5}
In this section, we propose a new conjecture which implies both the one supremum conjecture (with $A_\infty$ replaced by $A_\infty^{\textup{exp}}$) and the separated bump conjecture. On the other hand, it shares many similar results as the later two problems. To be precise, define
\begin{align*}
[u,v]_{A, q_0,p,r}&= \sup_{Q}\langle v\rangle_Q^{ \frac 1p-\frac 1{q_0} } \langle u\rangle_Q^{(\frac 1p-\frac 1{q_0})(r-1)} \frac {\langle u\rangle_Q^{\frac 1{p}}}{\langle u^{\frac 1p}\rangle_{A, Q}},\\
[v,u]_{B,  p_0',p',r'}&= \sup_{Q}\langle u\rangle_Q^{\frac 1{p'}-\frac 1{p_0'}} \langle v\rangle_Q^{(\frac 1{p'}-\frac 1{p_0'})(r'-1)} \frac {\langle v\rangle_Q^{\frac 1{p'}}}{\langle v^{\frac 1{p'}}\rangle_{B, Q}},
\end{align*}
where $A\in B_p$ and $B\in B_{p'}$. Recall that we say a Young function $A$ belongs to $B_p$ if
\[
\int_{1/2}^\infty \frac{A(t)}{t^p}\frac {dt}{t}<\infty,
\]
and the Luxembourg norm $\langle f\rangle_{A,Q}$ is defined by
\[
\langle f\rangle_{A,Q}:=\inf \{\lambda>0: \langle A(f/\lambda)\rangle_{Q}\le 1 \}.
\]
We propose the following conjecture.
\begin{Conjecture}\label{conjecture-l}
Let $(w,\sigma)$ be a pair of weights, $\lambda_Q=|Q|$, $p=q$ and $\mathcal N$ be the best constant such that \eqref{eq:norm} holds. Let $A\in B_p$ and $B\in B_{p'}$. Then there holds
\[
\mathcal N \le C ([u,v]_{A, q_0,p,r}+[v,u]_{B, p_0', p',r'}),
\]
where the constant $C>0$ is independent of $w$ and $\sigma$.
\end{Conjecture}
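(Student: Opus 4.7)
The plan is to mimic the layered Sawyer-testing approach of Section \ref{sec:4}, replacing the $A_\infty$ and $A_\infty^{\textup{exp}}$ stratifications by an Orlicz bump stratification. By Theorem \ref{thm:characterization} in its degenerate form \eqref{eq:sawyer}, Conjecture \ref{conjecture-l} reduces to the two Sawyer-type testing inequalities
\[
\| T_R(v) \|_{L^{p'}(u)} \lesssim [v,u]_{B, p_0', p', r'} \, v(R)^{1/p'}, \qquad \| T_R(u) \|_{L^p(v)} \lesssim [u,v]_{A, q_0, p, r} \, u(R)^{1/p},
\]
and by the symmetry $(p, p_0, q_0, u, A) \leftrightarrow (p', q_0', p_0', v, B)$, which also swaps $r$ and $r'$, it is enough to establish the first.

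For that inequality I would start from the Proposition \ref{dyadicsum} expansion employed in the proof of Theorem \ref{thm:sharpconstant}, namely
\[
\| T_R(v) \|_{L^{p'}(u)}^{p'} \eqsim \sum_{\substack{Q \in \mathcal S \\ Q \subset R}} \langle u \rangle_Q^{-1/p_0'} \langle v \rangle_Q^{1/q_0'} u(Q) \Big( \frac{1}{u(Q)} \sum_{Q' \subset Q} \langle u \rangle_{Q'}^{1/p_0} \langle v \rangle_{Q'}^{1/q_0'} |Q'| \Big)^{p'-1},
\]
and replace the pointwise $A_r$ trade-off of Section \ref{sec:3} by an Orlicz-Hölder trade-off pulled from the definition of $[v,u]_{B, p_0', p', r'}$: one expresses a fractional power of $\langle v \rangle_{Q'}$ through the Luxembourg average $\langle v^{1/p'} \rangle_{B, Q'}$ together with powers of $\langle u \rangle_{Q'}$ and the bump constant. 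To decouple the Luxembourg average from the outer sum, I would stratify the cubes of $\mathcal S$ by the dyadic scale of the local bump quantity, treat each layer with the Kolmogorov/maximal estimates of Section \ref{sec:3} as in the proofs of Theorems \ref{thm:onesupremum}--\ref{thm:exp}, and sum over layers using that $B \in B_{p'}$ converts the resulting geometric series into the finite integral $\int_{1/2}^\infty B(t)/t^{p'} \, dt/t$.

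The main obstacle is the per-layer estimate. In Section \ref{sec:4} the stratifying quantity $A_\infty(v, Q)$ (or its exponential variant) is controlled globally by a weighted maximal estimate, which forces the layer families to be Carleson with respect to $v$ and allows the Kolmogorov integrability of Proposition \ref{kolmogorov} to close the inner sum. No analogous global bound is available for the Luxembourg-to-average ratio involved in an Orlicz bump, so the layer families need not pack cleanly, and the inner dyadic sum loses its scale-invariant control. This is precisely the gap that has kept the classical separated bump conjecture (the $p_0 = 1$, $q_0 = \infty$ case) open beyond the log-bump regime \cite{CRV, ACM}. A proof of Conjecture \ref{conjecture-l} beyond those special Young functions will, I expect, require a genuinely new stopping-time dichotomy tailored to $A$ and $B$, in place of the clean Carleson embedding that sufficed in Theorems \ref{thm:onesupremum} and \ref{thm:exp}.
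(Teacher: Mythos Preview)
Your assessment is accurate, and in fact it agrees with the paper: Conjecture~\ref{conjecture-l} is stated in the paper as an open conjecture, not as a theorem, and the paper gives no proof of it. So there is no ``paper's own proof'' to compare against.

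What the paper does instead is exactly what your last paragraph anticipates: it proves two weakenings that avoid the obstacle you identified. Theorem~5.4 adds increasing factors $\psi$, $\phi$ to the bump quantities, with the integrability conditions $\int_{1/2}^\infty \phi(t)^{-p}\,dt/t<\infty$ and $\int_{1/2}^\infty \psi(t)^{-p'}\,dt/t<\infty$; these extra factors are precisely what allows the dyadic sum over the stratification index $a$ to converge after one applies H\"older across layers, so the missing global Carleson control is replaced by a summability hypothesis. Theorem~5.6 replaces the \emph{sum} $[u,v]_{A,q_0,p,r}+[v,u]_{B,p_0',p',r'}$ by the \emph{product}-type constant $[u,v]_{A,B}$, which lets one use parallel stopping cubes and H\"older directly on each cube, bypassing the layer decomposition entirely.

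Your identification of the gap---that the Luxembourg-to-average ratio $\langle v\rangle_Q^{1/p'}/\langle v^{1/p'}\rangle_{B,Q}$ admits no a priori Carleson packing with respect to $v$, unlike $A_\infty(v,Q)$ or $A_\infty^{\textup{exp}}(v,Q)$---is exactly right, and it is the reason the paper records this as a conjecture rather than a theorem. Since Conjecture~\ref{conjecture-l} is shown in Section~\ref{sec:5} to imply both the one supremum conjecture and the separated bump conjecture, a full proof would resolve two well-known open problems simultaneously, so your expectation that a genuinely new idea is required is well founded.
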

Now we shall see that this conjecture implies both the one supremum conjecture and also the separated bump conjecture. For general indices $p_0$ and $q_0$,
the separated bump conjecture can be stated as follows
\begin{Conjecture}
Let $(w,\sigma)$ be a pair of weights, $\lambda_Q=|Q|$, $p=q$ and $\mathcal N$ be the best constant such that \eqref{eq:norm} holds. Let $A\in B_p$ and $B\in B_{p'}$. Then there holds
\[
\mathcal N \le C ((u,v)_{A, q_0,p,r}+(v,u)_{B, p_0', p',r'}),
\]
where the constant $C>0$ is independent of $w$ and $\sigma$ and
\begin{align*}
(u,v)_{A, q_0,p,r}&= \sup_{Q}\langle v\rangle_Q^{ \frac 1p-\frac 1{q_0} } \langle u\rangle_Q^{(\frac 1p-\frac 1{q_0})(r-1)} \frac {\langle u^{\frac 1{p'}}\rangle_{\bar A, Q}}{\langle u\rangle_Q^{\frac 1{p'}}},\\
(v,u)_{B,  p_0',p',r'}&= \sup_{Q}\langle u\rangle_Q^{\frac 1{p'}-\frac 1{p_0'}} \langle v\rangle_Q^{(\frac 1{p'}-\frac 1{p_0'})(r'-1)} \frac {\langle v^{\frac 1{p}}\rangle_{\bar B, Q}}{\langle v\rangle_Q^{\frac 1{p}}}.
\end{align*}
\end{Conjecture}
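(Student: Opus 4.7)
My plan is to derive the separated bump conjecture from Conjecture~\ref{conjecture-l}, as the author announces in the paragraph preceding its statement. The two conjectures have identical skeletal form $\mathcal N\le C(\cdot_{A,\ldots}+\cdot_{B,\ldots})$, and term-by-term they differ only in the bump factor: $\langle u\rangle_Q^{1/p}/\langle u^{1/p}\rangle_{A,Q}$ appears in $[u,v]_{A,q_0,p,r}$ while $\langle u^{1/p'}\rangle_{\bar A,Q}/\langle u\rangle_Q^{1/p'}$ appears in $(u,v)_{A,q_0,p,r}$, and symmetrically for the second summand. Thus it suffices to prove the pointwise comparison
\[
\frac{\langle u\rangle_Q^{1/p}}{\langle u^{1/p}\rangle_{A,Q}} \lesssim \frac{\langle u^{1/p'}\rangle_{\bar A,Q}}{\langle u\rangle_Q^{1/p'}},
\]
together with its analogue obtained by interchanging $(u,p,A)\leftrightarrow(v,p',B)$.

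The decisive tool is the generalized H\"older inequality for Orlicz norms: for any complementary Young pair $(A,\bar A)$ one has $\langle fg\rangle_Q\le 2\langle f\rangle_{A,Q}\langle g\rangle_{\bar A,Q}$. Applied with $f=u^{1/p}$ and $g=u^{1/p'}$, so that $fg=u$, this yields
\[
\langle u\rangle_Q \le 2\langle u^{1/p}\rangle_{A,Q}\langle u^{1/p'}\rangle_{\bar A,Q}.
\]
Dividing both sides by $\langle u^{1/p}\rangle_{A,Q}\langle u\rangle_Q^{1/p'}$ recovers the required bump comparison. Multiplying through by the common prefactor $\langle v\rangle_Q^{1/p-1/q_0}\langle u\rangle_Q^{(1/p-1/q_0)(r-1)}$ and taking the supremum in $Q$ gives $[u,v]_{A,q_0,p,r}\le 2(u,v)_{A,q_0,p,r}$; the symmetric bound $[v,u]_{B,p_0',p',r'}\le 2(v,u)_{B,p_0',p',r'}$ follows by the same argument with $v$ in place of $u$. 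Substituting into Conjecture~\ref{conjecture-l} produces $\mathcal N\le 2C((u,v)_{A,q_0,p,r}+(v,u)_{B,p_0',p',r'})$, which is exactly the separated bump conjecture.

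The parallel deduction of the one supremum conjecture with $A_\infty$ replaced by $A_\infty^{\textup{exp}}$ uses Jensen's inequality in place of H\"older's. For any $A\in B_p$, the Luxembourg norm dominates the $L^1$ average, and Jensen then gives $\langle u^{1/p}\rangle_{A,Q}\ge\exp(\langle\log u\rangle_Q/p)$; hence
\[
\frac{\langle u\rangle_Q^{1/p}}{\langle u^{1/p}\rangle_{A,Q}}\le \Big(\frac{\langle u\rangle_Q}{\exp(\langle\log u\rangle_Q)}\Big)^{1/p}=(A_\infty^{\textup{exp}}(u,Q))^{1/p}.
\]
Combined with the algebraic identity $A_{r'}(u,v,Q)=\langle v\rangle_Q^{1/p-1/q_0}\langle u\rangle_Q^{(1/p-1/q_0)(r-1)}$, which follows from $(r-1)(r'-1)=1$ and $1/p'-1/p_0'=1/p_0-1/p$, this shows $[u,v]_{A,q_0,p,r}\le \sup_Q A_{r'}(u,v,Q)(A_\infty^{\textup{exp}}(u,Q))^{1/p}$, and symmetrically for $[v,u]_{B,p_0',p',r'}$. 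Both reductions are thus one-line applications of classical inequalities; the genuine obstacle lies in Conjecture~\ref{conjecture-l} itself, which subsumes two of the most stubborn open problems in weighted theory and would require substantially new ideas to attack directly.
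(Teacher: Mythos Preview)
Your proposal is correct and matches the paper's treatment exactly: the statement is a conjecture, not a theorem, and the paper's only argument is the same reduction you give---showing via the generalized H\"older inequality $\langle u\rangle_Q\le 2\langle u^{1/p}\rangle_{A,Q}\langle u^{1/p'}\rangle_{\bar A,Q}$ that Conjecture~\ref{conjecture-l} implies the separated bump conjecture, and via Jensen that it implies the $A_\infty^{\textup{exp}}$ one-supremum conjecture. You have supplied slightly more detail than the paper (which just states the resulting inequalities), but the approach is identical.
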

Indeed, by general H\"older's inequality,
\[
\frac {\langle u\rangle_Q^{\frac 1{p}}}{\langle u^{\frac 1p}\rangle_{A, Q}}\le \frac { \langle u^{\frac 1{p'}}\rangle_{\bar A, Q}}{\langle u \rangle_{ Q}^{\frac 1{p'}}},\,\,
\frac {\langle v\rangle_Q^{\frac 1{p'}}}{\langle v^{\frac 1{p'}}\rangle_{B, Q}}\le \frac { \langle v^{\frac 1{p}}\rangle_{\bar B, Q}}{\langle v \rangle_{ Q}^{\frac 1{p}}},
\]
which means Conjecture \ref{conjecture-l} implies separated bump conjecture. On the other hand, by Jensen's inequality,
\[
\langle u^{\frac 1p}\rangle_{A, Q}\ge \exp\Big(\dashint_Q \log u^{\frac 1p}\Big)=\Big( \exp\Big(\dashint_Q \log u \Big)\Big)^{\frac 1p}.
\]
Then
\[
\frac {\langle u\rangle_Q^{\frac 1{p}}}{\langle u^{\frac 1p}\rangle_{A, Q}}\le A_\infty^{\textup{exp}}(u,Q)^{\frac 1p},
\]
which means Conjecture \ref{conjecture-l} implies also the one supremum conjecture.

Although Conjecture \ref{conjecture-l} is stronger than both separated bump conjecture and one supremum conjecture, it still contain the essential property as the separated bump conjecture and one supremum conjecture. Indeed, we are free to write
\[
\langle \sigma^{\frac 1p}\rangle_{A, Q}=\langle \sigma\rangle_{\tilde A, Q}^{\frac 1p},
\]
where $\tilde A(t)= A(t^{1/p})$. Since $M_{A}^{\mathcal D}$ is bounded on $L^p$ (see \cite{P}), we have $M_{\tilde A}^{\mathcal D}$ is bounded on $L^1$, and this is the key point.
We shall see this by showing the following result.

\begin{Theorem}
Let $(w,\sigma)$ be a pair of weights, $\lambda_Q=|Q|$, $p=q$ and $\mathcal N$ be the best constant such that \eqref{eq:norm} holds. Let $A\in B_p$, $B\in B_{p'}$
 and  $\phi,\psi$ be  increasing functions such that
\[
\int_{1/2}^\infty \left(\frac 1{\phi(t)^p } +  \frac 1{\psi(t)^{p'}}\right) \frac{dt}t<\infty.
\]
Then
\[
\mathcal N\lesssim [u,v]_{A, q_0,p,r,\psi}+ [v,u]_{B, p_0',p',r',\phi},
\]
where
\begin{align*}
[u,v]_{A,q_0,p,r,\psi}&= \sup_{Q}\langle v\rangle_Q^{ \frac 1p-\frac 1{q_0} } \langle u\rangle_Q^{(\frac 1p-\frac 1{q_0})(r-1)} \frac {\langle u\rangle_Q^{\frac 1{p}}}{\langle u^{\frac 1p}\rangle_{A, Q}}\psi\Big( \frac {\langle u\rangle_Q^{\frac 1{p}}}{\langle u^{\frac 1p}\rangle_{A, Q}} \Big),\\
[v,u]_{B, p_0',p',r',\phi}&= \sup_{Q}\langle u\rangle_Q^{\frac 1{p'}-\frac 1{p_0'}} \langle v\rangle_Q^{(\frac 1{p'}-\frac 1{p_0'})(r'-1)} \frac {\langle v\rangle_Q^{\frac 1{p'}}}{\langle v^{\frac 1{p'}}\rangle_{B, Q}}\phi\Big( \frac {\langle v\rangle_Q^{\frac 1{p'}}}{\langle v^{\frac 1{p'}}\rangle_{B, Q}}\Big).
\end{align*}
\end{Theorem}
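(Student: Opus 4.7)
The plan is to run the shell-decomposition argument from the proof of Theorem~\ref{thm:exp}, but with the geometric mean $A_\infty^{\textup{exp}}$ replaced by the Orlicz ratios $\langle v\rangle_Q^{1/p'}/\langle v^{1/p'}\rangle_{B,Q}$ and $\langle u\rangle_Q^{1/p}/\langle u^{1/p}\rangle_{A,Q}$ that appear in the new bump quantities. By \eqref{eq:sawyer} it suffices to establish the two Sawyer-type testing bounds
\[
\|T_R(v)\|_{L^{p'}(u)}\lesssim [v,u]_{B,p_0',p',r',\phi}\,v(R)^{1/p'},\qquad \|T_R(u)\|_{L^{p}(v)}\lesssim [u,v]_{A,q_0,p,r,\psi}\,u(R)^{1/p},
\]
uniformly in $R\in\mathcal S$; by symmetry I would focus on the first and handle the second by interchanging the roles of $(A,\psi,p)$ and $(B,\phi,p')$.

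For this, I would introduce the shells
\[
\mathcal S_a:=\Big\{Q\in\mathcal S:\ 2^a\le \tfrac{\langle v\rangle_Q^{1/p'}}{\langle v^{1/p'}\rangle_{B,Q}}<2^{a+1}\Big\},\qquad a\ge 0,
\]
so that on $\mathcal S_a$, the monotonicity of $\phi$ combined with the definition of $[v,u]_{B,p_0',p',r',\phi}$ gives the key bound
\[
\langle u\rangle_Q^{\frac{1}{p'}-\frac{1}{p_0'}}\langle v\rangle_Q^{(\frac{1}{p'}-\frac{1}{p_0'})(r'-1)}\le \frac{[v,u]_{B,p_0',p',r',\phi}}{2^a\phi(2^a)}.
\]
I would then rerun the chain of inequalities from the proof of Theorem~\ref{thm:sharpconstant}, split into the two cases $p\ge\rho$ and $p<\rho$ with the very same interpolation parameters $\alpha$ and $\tilde\alpha$, substituting the above estimate in place of the $[v,u]_{A_r}^{\cdot}$ factor. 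The leftover exponential $2^a$ can be absorbed into $\langle v^{1/p'}\rangle_{B,Q}$ via the identity $\langle v^{1/p'}\rangle_{B,Q}^{p'}=\langle v\rangle_{\tilde B,Q}$, where $\tilde B(t):=B(t^{1/p'})$. The expected per-shell outcome is
\[
\|T_R(v)\|_{L^{p'}(u)}\Big|_{\mathcal S_a}\lesssim \frac{[v,u]_{B,p_0',p',r',\phi}}{\phi(2^a)}\Big(\sum_{\substack{Q\in\mathcal S_a\\ Q\subset R}}\langle v\rangle_{\tilde B,Q}|Q|\Big)^{1/p'},
\]
where the left-hand side denotes the norm of the part of $T_R(v)$ indexed by $\mathcal S_a$.

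The final step is to sum in $a$ by H\"older's inequality with exponents $p$ and $p'$: the outer factor $\bigl(\sum_{a\ge 0}\phi(2^a)^{-p}\bigr)^{1/p}$ is finite by the hypothesis $\int_{1/2}^\infty\phi(t)^{-p}\,dt/t<\infty$, and one is reduced to estimating
\[
\Big(\sum_{Q\in\mathcal S,\ Q\subset R}\langle v\rangle_{\tilde B,Q}|Q|\Big)^{1/p'}
\]
by a constant times $v(R)^{1/p'}$. This is the step I expect to be the main obstacle, though it is really the point of the remark preceding the theorem: sparseness ($|Q|\le 2|E_Q|$) bounds the sum by $\int_R M_{\tilde B}^{\mathcal D}(v\mathbf{1}_R)\,dx$, and the change of variables $s=t^{1/p'}$ converts the hypothesis $B\in B_{p'}$ into $\tilde B\in B_1$, so that $M_{\tilde B}^{\mathcal D}$ is bounded on $L^1$ by \cite{P}, producing the desired $\lesssim v(R)$. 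The symmetric estimate for $\|T_R(u)\|_{L^{p}(v)}$ runs identically, now using $\int_{1/2}^{\infty}\psi(t)^{-p'}\,dt/t<\infty$ and the analogous identity $\langle u^{1/p}\rangle_{A,Q}^{p}=\langle u\rangle_{\tilde A,Q}$ with $\tilde A(t):=A(t^{1/p})\in B_1$. Combining both bounds with \eqref{eq:sawyer} yields the theorem.
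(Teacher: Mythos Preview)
Your proposal is correct and follows essentially the same approach as the paper: the paper also decomposes $\mathcal S$ into the shells $\mathcal S_a$ determined by the Orlicz ratio $\langle v\rangle_Q^{1/p'}/\langle v^{1/p'}\rangle_{B,Q}$, reruns the chain of inequalities from Theorem~\ref{thm:sharpconstant} per shell to obtain the factor $[v,u]_{B,p_0',p',r',\phi}/(2^a\phi(2^a))$ times $\big(\sum_{Q\in\mathcal S_a,\,Q\subset R}v(Q)\big)^{1/p'}$, absorbs the $2^a$ into $\langle v^{1/p'}\rangle_{B,Q}^{p'}$, applies H\"older in $a$, and finishes with the $L^{p'}$-boundedness of $M_B$ (which is the same as your $L^1$-boundedness of $M_{\tilde B}$).
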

\begin{proof}
The proof is quite similar to the proof of Theorem \ref{thm:exp}. Set
\[
\mathcal S_a:=\{Q\in \mathcal S: 2^a\le \frac {\langle v\rangle_Q^{\frac 1{p'}}}{\langle v^{\frac 1{p'}}\rangle_{B, Q}}< 2^{a+1}\}.
\]
Then we have
\begin{eqnarray*}
 \| T_R(v)  \|_{L^{p'}(u)}
&\lesssim& \sum_{a\ge 0}  \frac{[v,u]_{B, p_0',p',r',\phi}}{2^{ a }\phi(2^a)}\Big(\sum_{\substack{Q\in\mathcal S_a\\ Q\subset R}}v(Q)\Big)^{\frac 1{p'}}\\
&\le& [v,u]_{B, p_0',p',r',\phi}\sum_{a\ge 0}  \frac{1}{\phi(2^a)}\Big(\sum_{\substack{Q\in\mathcal S_a\\ Q\subset R}}\langle v^{\frac 1{p'}}\rangle_{B, Q}^{p'} |Q|\Big)^{\frac 1{p'}}\\
&\le & [v,u]_{B, p_0',p',r',\phi}\Big(\sum_{a\ge 0}  \frac{1}{\phi(2^a)^p}\Big)^{\frac 1p} \Big(\sum_{\substack{Q\in\mathcal S \\ Q\subset R}}\langle v^{\frac 1{p'}}\rangle_{B, Q}^{p'} |Q|\Big)^{\frac 1{p'}}\\
&\lesssim& [v,u]_{B, p_0',p',r',\phi} v(R)^{\frac 1{p'}} \|M_B\|_{L^{p'}\rightarrow L^{p'}} \Big(\int_{1/2}^\infty \frac {1}{\phi(t)^{ p}}\frac {dt}t\Big)^{\frac 1p}.
\end{eqnarray*}
Then the desired result follows immediately from Theorem \ref{thm:characterization}.
\end{proof}
\begin{Remark}
In \cite{Lacey}, Lacey showed the same result with $\langle u\rangle_Q^{\frac 1{p}}/\langle u^{\frac 1p}\rangle_{A, Q}$ replaced by $\langle u^{\frac 1{p'}}\rangle_{\bar A, Q}/\langle u\rangle_Q^{\frac 1{p'}}$ and analogously for $v$ when $p_0=1$ and $q_0=\infty$ (Recall that in this case, $u=\sigma$ and $v=w$). And therefore, our estimate improves Lacey's bound.
On the other hand, it is easy to see that it improves Theorem \ref{thm:exp} as well.
\end{Remark}
We also have the following result.
\begin{Theorem}
Let $(w,\sigma)$ be a pair of weights, $\lambda_Q=|Q|$, $p=q$ and $\mathcal N$ be the best constant such that \eqref{eq:norm} holds. Let $A\in B_p$, $B\in B_{p'}$.
Then
\[
\mathcal N\lesssim [u,v]_{A, B},
\]
where
\[
[u,v]_{A, B}= \sup_{Q}\langle v\rangle_Q^{ \frac 1p-\frac 1{q_0} } \langle u\rangle_Q^{(\frac 1p-\frac 1{q_0})(r-1)} \frac {\langle u\rangle_Q^{\frac 1{p}}}{\langle u^{\frac 1p}\rangle_{A, Q}}\cdot\frac {\langle v\rangle_Q^{\frac 1{p'}}}{\langle v^{\frac 1{p'}}\rangle_{B, Q}}.
\]
\end{Theorem}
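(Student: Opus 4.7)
By Theorem~\ref{thm:characterization}, since $p=q$ the exponent $s$ in \eqref{eq:rDef} is $\infty$, so the claim reduces to the Sawyer-type testing conditions
\[
\|T_R(v)\|_{L^{p'}(u)}\lesssim [u,v]_{A,B}\,v(R)^{1/p'},\qquad \|T_R(u)\|_{L^p(v)}\lesssim [u,v]_{A,B}\,u(R)^{1/p},
\]
for every $R\in\mathcal S$. I would treat only the first; the second is entirely symmetric. Expanding via Proposition~\ref{dyadicsum} with $\sigma=u$, $s=p'$, and $\alpha_Q=\tau_Q\langle v\rangle_Q$, exactly as in the proof of Theorem~\ref{thm:sharpconstant}, yields
\[
\|T_R(v)\|_{L^{p'}(u)}^{p'}\eqsim \sum_{Q\subset R}\langle u\rangle_Q^{1/p_0}\langle v\rangle_Q^{1/q_0'}|Q|\Bigl(\frac{1}{u(Q)}\sum_{Q'\subset Q}\langle u\rangle_{Q'}^{1/p_0}\langle v\rangle_{Q'}^{1/q_0'}|Q'|\Bigr)^{p'-1}.
\]

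The key pointwise inequality, obtained by unwinding the definition of the joint bump and using the arithmetic $1/p+(1/p-1/q_0)(r-1)=1/p_0$ and $1/p'+1/p-1/q_0=1/q_0'$, is
\[
\langle u\rangle_Q^{1/p_0}\langle v\rangle_Q^{1/q_0'}\le [u,v]_{A,B}\,\langle u^{1/p}\rangle_{A,Q}\langle v^{1/p'}\rangle_{B,Q},
\]
which I would apply pointwise both to the outer factor and inside the inner sum. For the inner sum, sparseness ($|Q'|\le 2|E_{Q'}|$), the generalized H\"older inequality for Luxemburg averages, and the $L^p$- and $L^{p'}$-boundedness of $M_A^{\mathcal D}$ and $M_B^{\mathcal D}$ (from $A\in B_p$ and $B\in B_{p'}$) give
\[
\sum_{Q'\subset Q}\langle u^{1/p}\rangle_{A,Q'}\langle v^{1/p'}\rangle_{B,Q'}|Q'|\lesssim \int_Q M_A^{\mathcal D}(u^{1/p})M_B^{\mathcal D}(v^{1/p'})\,dx\lesssim u(Q)^{1/p}v(Q)^{1/p'}.
\]
Raising to the $(p'-1)$-th power contributes $[u,v]_{A,B}^{p'-1}u(Q)^{-1/p}v(Q)^{1/p}$, and a second application of the joint bump to the outer average reduces the whole estimate to the Carleson-type bound
\[
\mathcal C(R):=\sum_{Q\subset R}\langle u^{1/p}\rangle_{A,Q}\langle v^{1/p'}\rangle_{B,Q}\langle u\rangle_Q^{-1/p}\langle v\rangle_Q^{1/p}|Q|\lesssim v(R).
\]

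The main obstacle is verifying this last bound. The natural route is Cauchy--Schwarz in $\ell^p\times\ell^{p'}$: pairing $\langle v^{1/p'}\rangle_{B,Q}|Q|^{1/p'}$ with the remaining factor, the $\ell^{p'}$-side is controlled by sparseness plus $B\in B_{p'}$ through
\[
\Bigl(\sum_{Q\subset R}\langle v^{1/p'}\rangle_{B,Q}^{p'}|Q|\Bigr)^{1/p'}\lesssim \|M_B^{\mathcal D}(v^{1/p'})\|_{L^{p'}}\lesssim v(R)^{1/p'},
\]
exactly as in the proof of Theorem~\ref{thm:exp}. The crux is the complementary $\ell^p$-side, which must contribute at most $v(R)^{1/p}$: the crude bound $\langle u^{1/p}\rangle_{A,Q}\lesssim \langle u\rangle_Q^{1/p}$ (from $A(t)\lesssim t^p$ implicit in $A\in B_p$) collapses this sum to $(\sum_{Q\subset R}v(Q))^{1/p}$, which in general exceeds $v(R)^{1/p}$ by an $[v]_{A_\infty}$-type defect. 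The extra strength of the joint bump---combining the $u$- and $v$-ratios into a single supremum rather than two independent ones---is precisely what allows this defect to be absorbed. Carrying out this absorption, plausibly via a parallel stopping-time decomposition adapted simultaneously to the Luxemburg averages of $u^{1/p}$ and $v^{1/p'}$ (in the spirit of the splitting $\mathcal S_a$ used in Theorem~\ref{thm:exp}), is where the real work lies and is the step I expect to be the main obstacle.
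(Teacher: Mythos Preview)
Your reduction to the testing conditions and the pointwise use of the joint bump are fine, but the proof stalls at exactly the place you flag: the Carleson bound $\mathcal C(R)\lesssim v(R)$ is the whole content of the theorem, and the route you sketch does not close it. After your Cauchy--Schwarz split the $\ell^p$-factor is
\[
\Big(\sum_{Q\subset R}\langle u^{1/p}\rangle_{A,Q}^{\,p}\,\langle u\rangle_Q^{-1}\langle v\rangle_Q\,|Q|\Big)^{1/p},
\]
and there is no mechanism left to beat the $\sum_{Q\subset R}v(Q)$ bound: once you have reduced to the testing problem the functions are $f\equiv 1$, $g\equiv 1_R$, so any stopping family ``adapted to Luxemburg averages'' has nothing nontrivial to stop on, and the joint bump has already been fully spent. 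The defect you see is not an artifact---it is exactly the gap between one joint supremum and two separated suprema, and the testing route gives you no further access to it.

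The paper's argument avoids this by \emph{not} passing through the testing characterization at all. It works directly with the equivalent bilinear form \eqref{eq:after},
\[
\sum_{Q\in\mathcal S}\langle f\rangle_Q^u\langle g\rangle_Q^v\,\langle u\rangle_Q^{1/p_0}\langle v\rangle_Q^{1/q_0'}|Q|,
\]
and runs \emph{parallel stopping cubes} on the pairs $(f,u)$ and $(g,v)$, building principal families $\mathcal F$, $\mathcal G$ and organizing the sum by $\pi(Q)=(F,G)$. On each block $\{\pi(Q)=(F,G)\}$ one applies the joint bump once, then H\"older in $\ell^p\times\ell^{p'}$; the two factors $\big(\sum\langle u^{1/p}\rangle_{A,Q}^p|Q|\big)^{1/p}$ and $\big(\sum\langle v^{1/p'}\rangle_{B,Q}^{p'}|Q|\big)^{1/p'}$ are controlled by $u(F)^{1/p}$ and $v(G)^{1/p'}$ via $A\in B_p$, $B\in B_{p'}$ and sparseness. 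The outer sums over $F$ and $G$ are then closed by the standard Carleson estimate for stopping cubes,
\[
\sum_{F\in\mathcal F}(\langle f\rangle_F^u)^p\,u(F)\lesssim\|f\|_{L^p(u)}^p,
\]
and its dual. The point is that the stopping cubes are built on the \emph{general} functions $f,g$, which is precisely the flexibility lost once you freeze $f=1$ in the testing reduction.
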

\begin{proof}
Instead of using Proposition \ref{dyadicsum},  we shall use the technique of parallel stopping cubes here.   We define the principal cubes $\mathcal F$ for $(f, u)$  as follows
\begin{eqnarray*}
\mathcal F&:=& \bigcup_{k=0}^\infty \mathcal F_k, \quad \mathcal F_0:=  \{\textup{maximal cubes in }\mathcal S\}\\
\mathcal F_{k+1}&:=& \bigcup_{F\in \mathcal F_k}\ch_{\mathcal F}(F),\quad \ch_{\mathcal F}(F):= \{ Q\subsetneq F\, \textup{maximal \,s.t.} \langle f\rangle_Q^u>2\langle f\rangle_F^u \},
\end{eqnarray*}
and analogously define $\mathcal G$ for $(g, v)$. We also denote by  $\pi_{\mathcal F} (Q)$ the minimal cube in $\mathcal F$ which contains $Q$, and
$\pi (Q)=(F, G)$ if $\pi_{\mathcal F}(Q)=F$ and $\pi_{\mathcal G}(Q)=G$. By construction, we have
\begin{equation}\label{eq:stopping}
\sum_{F\in \mathcal F} (\langle f\rangle_F^u)^p u(F) \lesssim \|f\|_{L^p(u)}^p.
\end{equation} Now we start our arguments from \eqref{eq:after}. We have
\begin{align*}
 \sum_{Q\in \mathcal S} \langle f\rangle_Q^u \langle g\rangle_Q^v   \langle u\rangle_Q^{\frac 1{p_0}} \langle v\rangle_Q^{\frac 1{q_0'}}|Q|
&= \sum_{F\in \mathcal F}\sum_{G\in \mathcal G}\sum_{\pi(Q)=(F,G)} \langle f\rangle_Q^u \langle g\rangle_Q^v    \langle u\rangle_Q^{\frac 1{p_0}} \langle v\rangle_Q^{\frac 1{q_0'}}|Q|\\
&\lesssim   \sum_{F\in \mathcal F} \langle f\rangle_F^u \sum_{\substack{G\in \mathcal G\\ G\subset F}} \langle g\rangle_G^v \sum_{\pi(Q)=(F,G)}   \langle u\rangle_Q^{\frac 1{p_0}} \langle v\rangle_Q^{\frac 1{q_0'}}|Q|\\
&+ \sum_{G\in \mathcal G} \langle g\rangle_G^v \sum_{\substack{F\in \mathcal F\\ F\subset G}}  \langle f\rangle_F^u\sum_{\pi(Q)=(F,G)}   \langle u\rangle_Q^{\frac 1{p_0}} \langle v\rangle_Q^{\frac 1{q_0'}}|Q|\\
&:= I+II.
\end{align*}
By symmetry, we only need to estimate $I$. We have
\begin{eqnarray*}
&&\sum_{\pi(Q)=(F,G)} \langle u\rangle_Q^{\frac 1{p_0}} \langle v\rangle_Q^{\frac 1{q_0'}}|Q|\\
&= &\sum_{\pi(Q)=(F,G)} \langle u\rangle_Q^{\frac 1{p_0}-\frac 1p} \langle v\rangle_Q^{\frac 1{q_0'}-\frac 1{p'}}\frac {\langle u\rangle_Q^{\frac 1{p}}}{\langle u^{\frac 1p}\rangle_{A, Q}}\cdot\frac {\langle v\rangle_Q^{\frac 1{p'}}}{\langle v^{\frac 1{p'}}\rangle_{B, Q}}\langle u^{\frac 1p}\rangle_{A, Q}\cdot\langle v^{\frac 1{p'}}\rangle_{B, Q} |Q|\\
&\le &[u,v]_{A, B}\Big(\sum_{\pi(Q)=(F,G)} \langle u^{\frac 1p}\rangle_{A, Q}^p |Q|\Big)^{\frac 1p}\Big( \sum_{\pi(Q)=(F,G)}\langle v^{\frac 1{p'}}\rangle_{B, Q}^{p'}|Q| \Big)^{\frac 1{p'}}\\
&\lesssim&[u,v]_{A, B} \Big(\sum_{\pi(Q)=(F,G)} \langle u^{\frac 1p}\rangle_{A, Q}^p |Q|\Big)^{\frac 1p} v(G)^{\frac 1{p'}}.
\end{eqnarray*}
Then by H\"older's inequality,
\begin{align*}
I&\le [u,v]_{A, B} \sum_{F\in \mathcal F} \langle f\rangle_F^u \Big(\sum_{\substack{G\in \mathcal G\\ G\subset F}}\sum_{\pi(Q)=(F,G)} \langle u^{\frac 1p}\rangle_{A, Q}^p |Q|\Big)^{\frac 1p} \Big(\sum_{\substack{G\in \mathcal G\\ \pi_{\mathcal F}(G)= F}} (\langle g\rangle_G^v )^{p'} v(G)\Big)^{\frac 1{p'}}\\
&\lesssim [u,v]_{A, B} \sum_{F\in \mathcal F} \langle f\rangle_F^u u(F)^{\frac 1p}  \Big(\sum_{\substack{G\in \mathcal G\\ \pi_{\mathcal F}(G)= F}} (\langle g\rangle_G^v )^{p'} v(G)\Big)^{\frac 1{p'}}\\
&\le [u,v]_{A, B} \Big(\sum_{F\in \mathcal F} (\langle f\rangle_F^u)^p u(F)\Big)^{\frac 1p}  \Big(\sum_{F\in \mathcal F}\sum_{\substack{G\in \mathcal G\\ \pi_{\mathcal F}(G)= F}} (\langle g\rangle_G^v )^{p'} v(G)\Big)^{\frac 1{p'}}\\
&\lesssim [u,v]_{A, B} \|f\|_{L^p(u)} \|g\|_{L^{p'}(v)},
\end{align*}
where \eqref{eq:stopping} is used in the last step.
\end{proof}

\begin{Remark}
In \cite{Lerner1}, Lerner proved the so-called bump conjecture, i.e., the same result with $\langle u\rangle_Q^{\frac 1{p}}/\langle u^{\frac 1p}\rangle_{A, Q}$ replaced by $\langle u^{\frac 1{p'}}\rangle_{\bar A, Q}/\langle u\rangle_Q^{\frac 1{p'}}$ and analogously for $v$ when $p_0=1$ and $q_0=\infty$. Therefore, our result improves the bump theorem. In \cite{LM}, Lerner and Moen also proved the following estimate, for any Calder\'on-Zygmund operator $T$, there holds
\begin{align*}
\|T\|_{L^p(w)}&\le \sup_Q (\langle w\rangle_Q \langle w^{1-p'}\rangle_Q^{p-1})^{ \frac 1{p-1}} A_\infty^{\textup{exp}}(w)(Q)^{1-\frac 1{p-1}}\\&=\sup_{Q} \langle w\rangle_Q^{\frac 1p}\langle w^{1-p'}\rangle_Q^{\frac 1{p'}}A_\infty^{\textup{exp}}(w,Q)^{\frac 1{p'}} A_\infty^{\textup{exp}}(w^{1-p'},Q)^{\frac 1p}.
\end{align*}
Therefore, our result improves the above estimate as well.
\end{Remark}

\textbf{Acknowledgements}.\,\, The author would like to thank Prof. Tuomas P. Hyt\"onen for suggesting this problem and for many helpful discussions which improve the quality of this paper.

\end{document}